\documentclass[10pt,a4paper]{amsart}


\usepackage[austrian,english]{babel}
\usepackage[latin1]{inputenc}
\usepackage[a4paper]{geometry}

\usepackage{hyperref}
\usepackage{xparse}

\usepackage{amsmath}
\usepackage{amssymb}
\usepackage{mathtools}

\usepackage{tikz,tikz-3dplot}
\tikzset{every picture/.append style={scale=0.85}}
\tdplotsetmaincoords{60}{120}

\newtheorem{thm}{Theorem}

\newcommand{\bop}[1]{\boldsymbol{\operatorname{#1}}}
\newcommand{\abs}[1]{\left| #1 \right|}
\newcommand{\N}{\mathbb{N}}
\newcommand{\n}{\bop{n}}
\newcommand{\e}{\bop{e}}
\newcommand{\s}{\bop{s}}
\newcommand{\w}{\bop{w}}
\newcommand{\A}{\mathcal{A}}
\newcommand{\B}{\mathcal{B}}
\newcommand{\C}{\mathcal{C}}
\renewcommand{\O}{\mathcal{O}}
\newcommand{\I}{\mathcal{I}}
\newcommand{\tr}{\operatorname{tr}}
\newcommand{\Z}{\mathbb{Z}}


\begin{document}

\title{Building reverse plane partitions with rim-hook-shaped bricks}

\author{Robin Sulzgruber}
\thanks{Research supported by the Austrian Science Fund (FWF), grant S50-N15 in the framework of the Special Research Program ``Algorithmic and Enumerative Combinatorics'' (SFB F50).}
\address{Fakult{\"a}t f{\"u}r Mathematik, Universit{\"a}t Wien, Vienna, Austria}

\date{December 2016}


\email{robin.sulzgruber@univie.ac.at}
\begin{abstract}
The generating function of reverse plane partitions of a fixed shape factors into a product featuring the hook-lengths of this shape.
This result, which was first obtained by Stanley, can be explained bijectively using the Hillman--Grassl correspondence between reverse plane partitions and tableaux weighted by hook-lengths.
In this extended abstract an alternative bijection between the same families of objects is presented.
This construction is best perceived as a set of rules for building reverse plane partitions, viewed as arrangements of stacks of cubes, using bricks in the shape of rim-hooks.
\end{abstract}





\thispagestyle{empty}
\maketitle

Reverse plane partitions, like their relatives the plane partitions and semi-standard Young tableaux, appear naturally in the context of symmetric functions and the representation theory of permutation groups.
The generating function for reverse plane partitions was first obtained by Richard Stanley \cite[Prop.~18.3]{Stanley1971}.

\begin{thm}\label{Thm:rpp} The generating function for reverse plane partitions of shape $\lambda$ is given by
\begin{align*}
\sum_{\pi}q^{\abs{\pi}}
=\prod_{u\in\lambda}\frac{1}{1-q^{h(u)}},
\end{align*}
where $h(u)$ denotes the hook-length of the cell $u$.
\end{thm}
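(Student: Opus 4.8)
The plan is to prove the theorem bijectively. The classical route is the Hillman--Grassl correspondence, a weight-preserving bijection between reverse plane partitions of shape $\lambda$ and arbitrary fillings of the cells of $\lambda$ by nonnegative integers, in which a filling $(a_u)_{u\in\lambda}$ carries the weight $\sum_{u\in\lambda}h(u)a_u$. I will instead build such a bijection $\Psi$ from a set of rules for stacking rim-hook-shaped bricks. Granting $\Psi$, the theorem is immediate from the geometric series identity
\begin{align*}
\sum_{\pi}q^{\abs{\pi}}
=\sum_{(a_u)\in\N^{\lambda}}q^{\sum_{u}h(u)a_u}
=\prod_{u\in\lambda}\sum_{k\ge 0}q^{h(u)k}
=\prod_{u\in\lambda}\frac{1}{1-q^{h(u)}},
\end{align*}
all identities making sense as formal power series since only finitely many $\pi$ satisfy $\abs{\pi}=n$. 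Thus the whole problem is to construct $\Psi$.

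I view a reverse plane partition $\pi$ of shape $\lambda$ as the arrangement of stacks of unit cubes with $\pi(u)$ cubes over each cell $u\in\lambda$. To each cell $u$ I attach a canonical rim-hook $R_u$ of $\lambda$ with exactly $h(u)$ cells; a \emph{brick} is a copy of some $R_u$ laid down as a single-cube-thick horizontal slab at a prescribed height, subject to an admissibility condition guaranteeing the outcome is again a reverse plane partition. To build $\Psi^{-1}((a_u))$ I start from the empty reverse plane partition and, processing the cells of $\lambda$ in a fixed canonical order, add the brick $R_u$ exactly $a_u$ times (at successively increasing heights) for each $u$. Conversely, given $\pi$ I strip off bricks in the reverse order, recording how many copies of each $R_u$ are removed; this yields $\Psi(\pi)$. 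The weight identity $\abs{\pi}=\sum_u h(u)a_u$ is then automatic, since stacking a unit-thick slab over $R_u$ adds $\abs{R_u}=h(u)$ to the volume.

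The substance of the argument lies in three verifications. First, one must pin down the rim-hook $R_u$ and the height/admissibility rule for adding it, and check that an admissible addition preserves the reverse-plane-partition property. Second --- and this is the crux --- one must show that the peeling procedure inverts the building procedure: equivalently, that every reverse plane partition decomposes into rim-hook bricks in a \emph{unique} way, so that each multiplicity $a_u$ is forced by $\pi$. I expect to prove this by induction on $\abs{\pi}$, isolating at each stage a distinguished outermost brick that can always be removed and whose underlying cell $u$ is detected by a local feature of $\pi$ --- for instance the first antidiagonal along which $\pi$ fails to be constant, or the combinatorics of the outer rim of the solid. Third, to pass from an arbitrary sequence of admissible additions to the canonical order one needs commutation relations between bricks attached to different cells; these collapse any building history to the canonical one and so guarantee well-definedness.

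The main obstacle is precisely this bijectivity of the brick calculus: arguing that the rules never get stuck, that they reach every reverse plane partition, and that the decomposition is canonical. This is the point where the construction must genuinely mirror --- or streamline --- the way the Hillman--Grassl algorithm extracts one hook-length contribution at a time, and where a judicious choice of the rim-hooks $R_u$ and of the canonical order on the cells of $\lambda$ will be decisive.
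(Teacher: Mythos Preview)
Your plan is exactly the paper's: construct a weight-preserving bijection $\Psi$ between multi-sets of rim-hooks of $\lambda$ (equivalently, $\N$-fillings $(a_u)_{u\in\lambda}$) and reverse plane partitions of shape $\lambda$, after which the product formula is immediate. The paper's Theorems~\ref{Theorem:lexinsert} and~\ref{Theorem:lexfact} are precisely your ``building'' and ``peeling'' statements, with the canonical order being the reverse lexicographic order on cells and the peeling step singling out a minimal candidate cell in the content order.

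One caveat worth flagging: your description of a brick as ``a single-cube-thick horizontal slab at a prescribed height'' is the naive rule, and it does \emph{not} suffice. The paper's key wrinkle is that when the rim-hook $h$ does not fit on top of $\pi$ as a single slab, one cuts it along its row-parts and shifts the remainder diagonally (possibly several times), so that different parts of $h$ land in different layers and at shifted positions; formally, one adds $1$ along a $\pi$-compatible path with the same tail and length as $h$, not along $h$ itself. Without this, insertion of a lexicographically sorted sequence of rim-hooks can get stuck, and many reverse plane partitions are never reached. So the ``judicious choice'' you anticipate is not in the order or in the shape $R_u$ --- those are essentially forced --- but in this cut-and-shift insertion rule; the compatibility conditions encode it, and a non-crossing argument for such paths is what drives the uniqueness of the lexicographic factorisation.
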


A bijective proof of this result was found by Hillman and Grassl~\cite{HilGra1976}.
Their algorithm has since been related to the Robinson--Schensted--Knuth correspondence and Sch{\"u}tzenberger's jeu de taquin~\cite{Gansner1981,Kadell1997} and has become an integral part of enumerative combinatorics.
In particular, Gansner~\cite[Thm.~3.2]{Gansner1981} used the Hillman--Grassl correspondence to obtain a refined generating function.

\begin{thm}\label{Thm:trace} The trace generating function for reverse plane partitions of shape $\lambda$ is given by
\begin{align*}
\sum_{\pi}\prod_{k\in\Z}q_k^{\tr_k(\pi)}
&=\prod_{u\in\lambda}\frac{1}{1-q^{H(u)}}
\quad\text{where}\quad
\tr_k(\pi)=\sum_{\substack{(i,j)\in\lambda\\j-i=k}}\pi(i,j)
\quad\text{and}\quad
q^{H(i,j)}=\prod_{k=j-\lambda_j'}^{\lambda_i-i}q_k.
\end{align*}
\end{thm}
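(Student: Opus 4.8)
The plan is to reduce Theorem~\ref{Thm:trace} to a single weight-refined bijection and then collapse a product of geometric series. Explicitly, I would aim to construct a bijection $\Phi$ from reverse plane partitions $\pi$ of shape $\lambda$ onto arrays $T\colon\lambda\to\N$ with the property that
\begin{align*}
\prod_{k\in\Z}q_k^{\tr_k(\pi)}=\prod_{u\in\lambda}\bigl(q^{H(u)}\bigr)^{T(u)},\qquad T=\Phi(\pi).
\end{align*}
Given such a $\Phi$ the theorem follows at once, because the exponents are organized by the independent ``coordinates'' $T(u)$:
\begin{align*}
\sum_{\pi}\prod_{k\in\Z}q_k^{\tr_k(\pi)}
=\sum_{T\colon\lambda\to\N}\ \prod_{u\in\lambda}\bigl(q^{H(u)}\bigr)^{T(u)}
=\prod_{u\in\lambda}\ \sum_{t\ge0}\bigl(q^{H(u)}\bigr)^{t}
=\prod_{u\in\lambda}\frac{1}{1-q^{H(u)}}.
\end{align*}
Specializing every $q_k$ to $q$ recovers Theorem~\ref{Thm:rpp}, since $q^{H(i,j)}$ then carries $(\lambda_i-i)-(j-\lambda_j')+1=h(i,j)$ factors.

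To build $\Phi$ I would peel $\pi$ apart one \emph{brick} at a time. Thinking of $\pi$ as stacks of cubes, the rules should, for any nonzero $\pi$, single out a rim hook $S\subseteq\lambda$ together with a cell $u(S)\in\lambda$ such that removing one layer of cubes along $S$ again leaves a reverse plane partition; one then iterates on this smaller partition. Since $\abs{\pi}$ drops by $\abs{S}$ at each step, the process terminates at the empty partition, and declaring $T(u)$ to be the number of peeled bricks with $u(S)=u$ yields $T$. The inverse map restacks the bricks in the reverse of the (canonically ordered) sequence in which they came off.

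There are then two things to verify. The first is that the local rules genuinely define a bijection: for every nonzero $\pi$ a peelable brick exists and is forced, every intermediate array is a reverse plane partition, the restacking rule inverts the peeling, and every array $T\colon\lambda\to\N$ is realized---so that the sum over $T$ above is unconstrained. The second is \emph{trace compatibility}: the brick $S$ recorded at a cell $(i,j)$ must always have its south-west foot on the diagonal $j-\lambda_j'$ (the content of the bottom cell of column $j$) and its north-east tip on the diagonal $\lambda_i-i$ (the content of the last cell of row $i$). Since a rim hook meets a contiguous block of diagonals in exactly one cell each, this forces $S$ to occupy precisely the diagonals $k$ with $j-\lambda_j'\le k\le\lambda_i-i$, one cell per diagonal; hence removing such a brick lowers $\tr_k(\pi)$ by $1$ for exactly those $k$, i.e.\ divides $\prod_k q_k^{\tr_k}$ by $q^{H(i,j)}$. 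Induction on $\abs{\pi}$ then delivers the displayed identity for $\Phi$.

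The trace-compatibility point is essentially bookkeeping once one knows how the brick is routed, so I expect the real difficulty to be the first point---that the peeling and restacking rules are consistent and reversible, hence that $\Phi$ is a genuine bijection onto all of $\N^{\lambda}$. This is where the classical route leans on the Hillman--Grassl algorithm and Gansner's study of it; in the brick-building framework it becomes the statement that a rim-hook-shaped brick can always be slid past the cubes already in place without getting stuck, and that this motion can be reversed. I would budget most of the effort, and all of the subtlety, for establishing that invariant.
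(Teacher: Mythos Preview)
Your proposal is correct and mirrors the paper's own proof: the paper assumes the bijection $\Psi$ between reverse plane partitions and multisets of rim-hooks (equivalently your arrays $T\colon\lambda\to\N$) via Theorems~\ref{Theorem:lexinsert} and~\ref{Theorem:lexfact}, and then derives Theorem~\ref{Thm:trace} from exactly the trace-compatibility observation you give---namely that the insertion path $P$ for the rim-hook attached to $(i,j)$ is a North-East path with tail $(i,\lambda_i)$ and length $h(i,j)-1$, hence meets each diagonal $k$ with $j-\lambda_j'\le k\le\lambda_i-i$ precisely once, so that $\prod_k q_k^{\tr_k(h*\pi)}=q^{H(i,j)}\cdot\prod_k q_k^{\tr_k(\pi)}$. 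You are also right that the substantive work is entirely in establishing the bijection; the paper defers those proofs, and its displayed proof of Theorem~\ref{Thm:trace} is just the three-line bookkeeping you anticipated.
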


More recently Morales, Panova and Pak~\cite{MorPanPak2016} used the Hillman--Grassl correspondence to obtain a (trace) generating function for reverse plane partitions of skew shape.

\smallskip
In this extended abstract we propose a different correspondence between reverse plane partitions and multi-sets of hooks, and thus provide an alternative bijective proof of Theorems~\ref{Thm:rpp} and~\ref{Thm:trace}.
Section~\ref{Section:blocks} contains an intuitive description of our algorithm,
while a formal definition is provided in Section~\ref{Section:insert} together with the needed background on partitions and reverse plane partitions.
The ``forward'' direction of our bijection attempts to insert rim-hooks into reverse plane partitions, using explicitly the shape of the rim-hook.
The ``inverse'' bijection, which is treated in Section~\ref{Section:factors}, extracts a rim-hook by decreasing a reverse plane partition along a path and is especially reminiscent of the method of Hillman and Grassl.

\smallskip
The relationship between the Hillman--Grassl correspondence and our new bijection is unclear at this time, but demands to be investigated.
Whether some of the deeper connections and applications of the Hillman--Grassl correspondence mentioned above are also reflected in this new correspondence is another interesting problem.

\section{Building with bricks}\label{Section:blocks}

This section aims to give an informal description of a simple set of rules how to build reverse plane partitions using bricks of rim-hook shape.
A precise mathematical formulation of the algorithm and all involved definitions are contained in Section~\ref{Section:insert}.

\smallskip
A partition is a left and top justified arrangement of cells.
A reverse plane partition is a left and top justified arrangement of numbers such that rows and columns are weakly increasing.
Alternatively we may represent reverse plane partitions by stacks of cubes.
Examples of all three objects are shown below.
Note that we have rotated the reverse plane partition to obtain the representation by cubes such that all stacks of cubes are visible.

\begin{align}
\label{eq:0123-122-1}
\raisebox{-2em}{
\begin{tikzpicture}[scale=.6]
\begin{scope}
\draw
(0,1)--(1,1)
(0,2)--(3,2)
(1,1)--(1,3)
(2,1)--(2,3)
(3,2)--(3,3)
;
\draw[line width=1pt](0,0)--(1,0)--(1,1)--(3,1)--(3,2)--(4,2)--(4,3)--(0,3)--cycle;
\end{scope}
\begin{scope}[xshift=8cm]
\draw
(0,1)--(1,1)
(0,2)--(3,2)
(1,1)--(1,3)
(2,1)--(2,3)
(3,2)--(3,3)
;
\draw[line width=1pt](0,0)--(1,0)--(1,1)--(3,1)--(3,2)--(4,2)--(4,3)--(0,3)--cycle;
\draw[xshift=5mm,yshift=5mm]
(0,2)node{{$0$}}
(1,2)node{{$1$}}
(2,2)node{{$2$}}
(3,2)node{{$3$}}
(0,1)node{{$1$}}
(1,1)node{{$2$}}
(2,1)node{{$2$}}
(0,0)node{{$1$}};
\end{scope}
%
%
\begin{scope}[xshift=18cm,yshift=2cm,tdplot_main_coords]
\clip(3,0,0)--(3,4,0)--(0,4,0)--(0,4,1)--(0,3,1)--(1,3,1)--(1,3,2)--(1,2,2)--(1,1,2)--(2,1,2)--(2,1,3)--(2,0,3)--(3,0,3)--(3,0,0)--cycle;
\draw[line width=1pt,fill=black!5]
(2,3,0)--(3,3,0)--(3,4,0)--(2,4,0)--cycle
(2,2,1)--(3,2,1)--(3,3,1)--(2,3,1)--cycle
(2,1,2)--(3,1,2)--(3,2,2)--(2,2,2)--cycle
(2,0,3)--(3,0,3)--(3,1,3)--(2,1,3)--cycle
(1,3,1)--(2,3,1)--(2,4,1)--(1,4,1)--cycle
(1,2,2)--(2,2,2)--(2,3,2)--(1,3,2)--cycle
(1,1,2)--(2,1,2)--(2,2,2)--(1,2,2)--cycle
(0,3,1)--(1,3,1)--(1,4,1)--(0,4,1)--cycle
;
\draw[line width=1pt,fill=black!20]
(3,2,0)--(3,3,0)--(3,3,1)--(3,2,1)--cycle
(3,1,1)--(3,2,1)--(3,2,2)--(3,1,2)--cycle
(3,1,0)--(3,2,0)--(3,2,1)--(3,1,1)--cycle
(3,0,2)--(3,1,2)--(3,1,3)--(3,0,3)--cycle
(3,0,1)--(3,1,1)--(3,1,2)--(3,0,2)--cycle
(3,0,0)--(3,1,0)--(3,1,1)--(3,0,1)--cycle
(2,3,0)--(2,4,0)--(2,4,1)--(2,3,1)--cycle
(2,2,1)--(2,3,1)--(2,3,2)--(2,2,2)--cycle
;
\draw[line width=1pt,fill=black!50]
(2,3,0)--(2,3,1)--(3,3,1)--(3,3,0)--cycle
(2,2,1)--(2,2,2)--(3,2,2)--(3,2,1)--cycle
(2,1,2)--(2,1,3)--(3,1,3)--(3,1,2)--cycle
(1,4,0)--(1,4,1)--(2,4,1)--(2,4,0)--cycle
(1,3,1)--(1,3,2)--(2,3,2)--(2,3,1)--cycle
(0,4,0)--(0,4,1)--(1,4,1)--(1,4,0)--cycle
;
\draw[line width=2pt](3,0,0)--(3,4,0)--(0,4,0)--(0,4,1)--(0,3,1)--(1,3,1)--(1,3,2)--(1,2,2)--(1,1,2)--(2,1,2)--(2,1,3)--(2,0,3)--(3,0,3)--(3,0,0)--cycle;
\end{scope}
\end{tikzpicture}
}
\end{align}

Let $u$ be a cell of a partition $\lambda$.
The rim-hook of $u$ is the set of cells of $\lambda$ visited when following along the rim of $\lambda$ starting in the column of $u$ and ending in the row of $u$.
We view rim-hooks as stacks of cubes of height one.
The example below shows all eight rim-hooks of the partition $\lambda=(4,3,1)$.

\begin{align}
\label{eq:rimhooks}
\raisebox{-4em}{
\begin{tikzpicture}[scale=.6,tdplot_main_coords]
\begin{scope}[xshift=0cm,yshift=0cm]
\clip(3,0,0)--(3,4,0)--(0,4,0)--(0,3,0)--(1,3,0)--(1,2,0)--(1,1,0)--(2,1,0)--(2,1,1)--(2,0,1)--(3,0,1)--(3,0,0)--cycle;
\draw[line width=1pt,fill=black!5]
(2,3,0)--(3,3,0)--(3,4,0)--(2,4,0)--cycle
(2,2,0)--(3,2,0)--(3,3,0)--(2,3,0)--cycle
(2,1,0)--(3,1,0)--(3,2,0)--(2,2,0)--cycle
(2,0,1)--(3,0,1)--(3,1,1)--(2,1,1)--cycle
(1,3,0)--(2,3,0)--(2,4,0)--(1,4,0)--cycle
(1,2,0)--(2,2,0)--(2,3,0)--(1,3,0)--cycle
(1,1,0)--(2,1,0)--(2,2,0)--(1,2,0)--cycle
(0,3,0)--(1,3,0)--(1,4,0)--(0,4,0)--cycle
;
\draw[line width=1pt,fill=black!20]
(3,0,0)--(3,1,0)--(3,1,1)--(3,0,1)--cycle
;
\draw[line width=1pt,fill=black!50]
(2,1,0)--(2,1,1)--(3,1,1)--(3,1,0)--cycle
;
\draw[line width=2pt](3,0,0)--(3,4,0)--(0,4,0)--(0,3,0)--(1,3,0)--(1,2,0)--(1,1,0)--(2,1,0)--(2,1,1)--(2,0,1)--(3,0,1)--(3,0,0)--cycle;
\end{scope}
\begin{scope}[xshift=6cm,yshift=0cm]
\clip(3,0,0)--(3,4,0)--(0,4,0)--(0,3,0)--(1,3,0)--(1,2,0)--(1,2,1)--(1,1,1)--(2,1,1)--(2,1,0)--(2,0,0)--(3,0,0)--cycle;
\draw[line width=1pt,fill=black!5]
(2,3,0)--(3,3,0)--(3,4,0)--(2,4,0)--cycle
(2,2,0)--(3,2,0)--(3,3,0)--(2,3,0)--cycle
(2,1,0)--(3,1,0)--(3,2,0)--(2,2,0)--cycle
(2,0,0)--(3,0,0)--(3,1,0)--(2,1,0)--cycle
(1,3,0)--(2,3,0)--(2,4,0)--(1,4,0)--cycle
(1,2,0)--(2,2,0)--(2,3,0)--(1,3,0)--cycle
(1,1,1)--(2,1,1)--(2,2,1)--(1,2,1)--cycle
(0,3,0)--(1,3,0)--(1,4,0)--(0,4,0)--cycle
;
\draw[line width=1pt,fill=black!20]
(2,1,0)--(2,2,0)--(2,2,1)--(2,1,1)--cycle
;
\draw[line width=1pt,fill=black!50]
(1,2,0)--(1,2,1)--(2,2,1)--(2,2,0)--cycle
;
\draw[line width=2pt](3,0,0)--(3,4,0)--(0,4,0)--(0,3,0)--(1,3,0)--(1,2,0)--(1,2,1)--(1,1,1)--(2,1,1)--(2,1,0)--(2,0,0)--(3,0,0)--cycle;
\end{scope}
\begin{scope}[xshift=12cm,yshift=0cm]
\clip(3,0,0)--(3,4,0)--(0,4,0)--(0,3,0)--(1,3,0)--(1,2,0)--(1,2,1)--(1,1,1)--(2,1,1)--(2,0,1)--(3,0,1)--(3,0,0)--cycle;
\draw[line width=1pt,fill=black!5]
(2,3,0)--(3,3,0)--(3,4,0)--(2,4,0)--cycle
(2,2,0)--(3,2,0)--(3,3,0)--(2,3,0)--cycle
(2,1,1)--(3,1,1)--(3,2,1)--(2,2,1)--cycle
(2,0,1)--(3,0,1)--(3,1,1)--(2,1,1)--cycle
(1,3,0)--(2,3,0)--(2,4,0)--(1,4,0)--cycle
(1,2,0)--(2,2,0)--(2,3,0)--(1,3,0)--cycle
(1,1,1)--(2,1,1)--(2,2,1)--(1,2,1)--cycle
(0,3,0)--(1,3,0)--(1,4,0)--(0,4,0)--cycle
;
\draw[line width=1pt,fill=black!20]
(3,1,0)--(3,2,0)--(3,2,1)--(3,1,1)--cycle
(3,0,0)--(3,1,0)--(3,1,1)--(3,0,1)--cycle
;
\draw[line width=1pt,fill=black!50]
(2,2,0)--(2,2,1)--(3,2,1)--(3,2,0)--cycle
(1,2,0)--(1,2,1)--(2,2,1)--(2,2,0)--cycle
;
\draw[line width=2pt](3,0,0)--(3,4,0)--(0,4,0)--(0,3,0)--(1,3,0)--(1,2,0)--(1,2,1)--(1,1,1)--(2,1,1)--(2,0,1)--(3,0,1)--(3,0,0)--cycle;
\end{scope}
\begin{scope}[xshift=18cm,yshift=0cm]
\clip(3,0,0)--(3,4,0)--(0,4,0)--(0,3,0)--(1,3,0)--(1,3,1)--(1,2,1)--(1,1,1)--(2,1,1)--(2,1,0)--(2,0,0)--(3,0,0)--cycle;
\draw[line width=1pt,fill=black!5]
(2,3,0)--(3,3,0)--(3,4,0)--(2,4,0)--cycle
(2,2,0)--(3,2,0)--(3,3,0)--(2,3,0)--cycle
(2,1,0)--(3,1,0)--(3,2,0)--(2,2,0)--cycle
(2,0,0)--(3,0,0)--(3,1,0)--(2,1,0)--cycle
(1,3,0)--(2,3,0)--(2,4,0)--(1,4,0)--cycle
(1,2,1)--(2,2,1)--(2,3,1)--(1,3,1)--cycle
(1,1,1)--(2,1,1)--(2,2,1)--(1,2,1)--cycle
(0,3,0)--(1,3,0)--(1,4,0)--(0,4,0)--cycle
;
\draw[line width=1pt,fill=black!20]
(2,2,0)--(2,3,0)--(2,3,1)--(2,2,1)--cycle
(2,1,0)--(2,2,0)--(2,2,1)--(2,1,1)--cycle
;
\draw[line width=1pt,fill=black!50]
(1,3,0)--(1,3,1)--(2,3,1)--(2,3,0)--cycle
;
\draw[line width=2pt](3,0,0)--(3,4,0)--(0,4,0)--(0,3,0)--(1,3,0)--(1,3,1)--(1,2,1)--(1,1,1)--(2,1,1)--(2,1,0)--(2,0,0)--(3,0,0)--cycle;
\end{scope}
\begin{scope}[xshift=0cm,yshift=-4cm]
\clip(3,0,0)--(3,4,0)--(0,4,0)--(0,3,0)--(1,3,0)--(1,3,1)--(1,2,1)--(1,1,1)--(2,1,1)--(2,0,1)--(3,0,1)--(3,0,0)--cycle;
\draw[line width=1pt,fill=black!5]
(2,3,0)--(3,3,0)--(3,4,0)--(2,4,0)--cycle
(2,2,0)--(3,2,0)--(3,3,0)--(2,3,0)--cycle
(2,1,1)--(3,1,1)--(3,2,1)--(2,2,1)--cycle
(2,0,1)--(3,0,1)--(3,1,1)--(2,1,1)--cycle
(1,3,0)--(2,3,0)--(2,4,0)--(1,4,0)--cycle
(1,2,1)--(2,2,1)--(2,3,1)--(1,3,1)--cycle
(1,1,1)--(2,1,1)--(2,2,1)--(1,2,1)--cycle
(0,3,0)--(1,3,0)--(1,4,0)--(0,4,0)--cycle
;
\draw[line width=1pt,fill=black!20]
(3,1,0)--(3,2,0)--(3,2,1)--(3,1,1)--cycle
(3,0,0)--(3,1,0)--(3,1,1)--(3,0,1)--cycle
(2,2,0)--(2,3,0)--(2,3,1)--(2,2,1)--cycle
;
\draw[line width=1pt,fill=black!50]
(2,2,0)--(2,2,1)--(3,2,1)--(3,2,0)--cycle
(1,3,0)--(1,3,1)--(2,3,1)--(2,3,0)--cycle
;
\draw[line width=2pt](3,0,0)--(3,4,0)--(0,4,0)--(0,3,0)--(1,3,0)--(1,3,1)--(1,2,1)--(1,1,1)--(2,1,1)--(2,0,1)--(3,0,1)--(3,0,0)--cycle;
\end{scope}
\begin{scope}[xshift=6cm,yshift=-4cm]
\clip(3,0,0)--(3,4,0)--(0,4,0)--(0,4,1)--(0,3,1)--(1,3,1)--(1,3,0)--(1,2,0)--(1,1,0)--(2,1,0)--(2,0,0)--(3,0,0)--cycle;
\draw[line width=1pt,fill=black!5]
(2,3,0)--(3,3,0)--(3,4,0)--(2,4,0)--cycle
(2,2,0)--(3,2,0)--(3,3,0)--(2,3,0)--cycle
(2,1,0)--(3,1,0)--(3,2,0)--(2,2,0)--cycle
(2,0,0)--(3,0,0)--(3,1,0)--(2,1,0)--cycle
(1,3,0)--(2,3,0)--(2,4,0)--(1,4,0)--cycle
(1,2,0)--(2,2,0)--(2,3,0)--(1,3,0)--cycle
(1,1,0)--(2,1,0)--(2,2,0)--(1,2,0)--cycle
(0,3,1)--(1,3,1)--(1,4,1)--(0,4,1)--cycle
;
\draw[line width=1pt,fill=black!20]
(1,3,0)--(1,4,0)--(1,4,1)--(1,3,1)--cycle
;
\draw[line width=1pt,fill=black!50]
(0,4,0)--(0,4,1)--(1,4,1)--(1,4,0)--cycle
;
\draw[line width=2pt](3,0,0)--(3,4,0)--(0,4,0)--(0,4,1)--(0,3,1)--(1,3,1)--(1,3,0)--(1,2,0)--(1,1,0)--(2,1,0)--(2,0,0)--(3,0,0)--cycle;
\end{scope}
\begin{scope}[xshift=12cm,yshift=-4cm]
\clip(3,0,0)--(3,4,0)--(0,4,0)--(0,4,1)--(0,3,1)--(1,3,1)--(1,2,1)--(1,1,1)--(2,1,1)--(2,1,0)--(2,0,0)--(3,0,0)--cycle;
\draw[line width=1pt,fill=black!5]
(2,3,0)--(3,3,0)--(3,4,0)--(2,4,0)--cycle
(2,2,0)--(3,2,0)--(3,3,0)--(2,3,0)--cycle
(2,1,0)--(3,1,0)--(3,2,0)--(2,2,0)--cycle
(2,0,0)--(3,0,0)--(3,1,0)--(2,1,0)--cycle
(1,3,1)--(2,3,1)--(2,4,1)--(1,4,1)--cycle
(1,2,1)--(2,2,1)--(2,3,1)--(1,3,1)--cycle
(1,1,1)--(2,1,1)--(2,2,1)--(1,2,1)--cycle
(0,3,1)--(1,3,1)--(1,4,1)--(0,4,1)--cycle
;
\draw[line width=1pt,fill=black!20]
(2,3,0)--(2,4,0)--(2,4,1)--(2,3,1)--cycle
(2,2,0)--(2,3,0)--(2,3,1)--(2,2,1)--cycle
(2,1,0)--(2,2,0)--(2,2,1)--(2,1,1)--cycle
;
\draw[line width=1pt,fill=black!50]
(1,4,0)--(1,4,1)--(2,4,1)--(2,4,0)--cycle
(0,4,0)--(0,4,1)--(1,4,1)--(1,4,0)--cycle
;
\draw[line width=2pt](3,0,0)--(3,4,0)--(0,4,0)--(0,4,1)--(0,3,1)--(1,3,1)--(1,2,1)--(1,1,1)--(2,1,1)--(2,1,0)--(2,0,0)--(3,0,0)--cycle;
\end{scope}
\begin{scope}[xshift=18cm,yshift=-4cm]
\clip(3,0,0)--(3,4,0)--(0,4,0)--(0,4,1)--(0,3,1)--(1,3,1)--(1,2,1)--(1,1,1)--(2,1,1)--(2,0,1)--(3,0,1)--(3,0,0)--cycle;
\draw[line width=1pt,fill=black!5]
(2,3,0)--(3,3,0)--(3,4,0)--(2,4,0)--cycle
(2,2,0)--(3,2,0)--(3,3,0)--(2,3,0)--cycle
(2,1,1)--(3,1,1)--(3,2,1)--(2,2,1)--cycle
(2,0,1)--(3,0,1)--(3,1,1)--(2,1,1)--cycle
(1,3,1)--(2,3,1)--(2,4,1)--(1,4,1)--cycle
(1,2,1)--(2,2,1)--(2,3,1)--(1,3,1)--cycle
(1,1,1)--(2,1,1)--(2,2,1)--(1,2,1)--cycle
(0,3,1)--(1,3,1)--(1,4,1)--(0,4,1)--cycle
;
\draw[line width=1pt,fill=black!20]
(3,1,0)--(3,2,0)--(3,2,1)--(3,1,1)--cycle
(3,0,0)--(3,1,0)--(3,1,1)--(3,0,1)--cycle
(2,3,0)--(2,4,0)--(2,4,1)--(2,3,1)--cycle
(2,2,0)--(2,3,0)--(2,3,1)--(2,2,1)--cycle
;
\draw[line width=1pt,fill=black!50]
(2,2,0)--(2,2,1)--(3,2,1)--(3,2,0)--cycle
(1,4,0)--(1,4,1)--(2,4,1)--(2,4,0)--cycle
(0,4,0)--(0,4,1)--(1,4,1)--(1,4,0)--cycle
;
\draw[line width=2pt](3,0,0)--(3,4,0)--(0,4,0)--(0,4,1)--(0,3,1)--(1,3,1)--(1,2,1)--(1,1,1)--(2,1,1)--(2,0,1)--(3,0,1)--(3,0,0)--cycle;
\end{scope}
\end{tikzpicture}
}
\end{align}

A part of a rim-hook is a maximal set of cubes contained in the same row of the underlying partition.
For example, the following rim-hook has three parts of sizes two, three and one.

\begin{align*}
\begin{tikzpicture}[scale=.6,tdplot_main_coords]
\begin{scope}[xshift=0cm,yshift=0cm]
\clip(3,0,0)--(3,4,0)--(0,4,0)--(0,4,1)--(0,3,1)--(1,3,1)--(1,2,1)--(1,1,1)--(2,1,1)--(2,0,1)--(3,0,1)--(3,0,0)--cycle;
\draw[line width=1pt,fill=black!5]
(2,3,0)--(3,3,0)--(3,4,0)--(2,4,0)--cycle
(2,2,0)--(3,2,0)--(3,3,0)--(2,3,0)--cycle
;
\draw[line width=1pt,fill=red]
(2,0,1)--(3,0,1)--(3,1,1)--(2,1,1)--cycle
(2,1,1)--(3,1,1)--(3,2,1)--(2,2,1)--cycle
;
\draw[line width=1pt,fill=yellow]
(1,3,1)--(2,3,1)--(2,4,1)--(1,4,1)--cycle
(1,2,1)--(2,2,1)--(2,3,1)--(1,3,1)--cycle
(1,1,1)--(2,1,1)--(2,2,1)--(1,2,1)--cycle
;
\draw[line width=1pt,fill=cyan]
(0,3,1)--(1,3,1)--(1,4,1)--(0,4,1)--cycle
;
\draw[line width=1pt,fill=black!20]
(3,1,0)--(3,2,0)--(3,2,1)--(3,1,1)--cycle
(3,0,0)--(3,1,0)--(3,1,1)--(3,0,1)--cycle
(2,3,0)--(2,4,0)--(2,4,1)--(2,3,1)--cycle
(2,2,0)--(2,3,0)--(2,3,1)--(2,2,1)--cycle
;
\draw[line width=1pt,fill=black!50]
(2,2,0)--(2,2,1)--(3,2,1)--(3,2,0)--cycle
(1,4,0)--(1,4,1)--(2,4,1)--(2,4,0)--cycle
(0,4,0)--(0,4,1)--(1,4,1)--(1,4,0)--cycle
;
\draw[line width=2pt](3,0,0)--(3,4,0)--(0,4,0)--(0,4,1)--(0,3,1)--(1,3,1)--(1,2,1)--(1,1,1)--(2,1,1)--(2,0,1)--(3,0,1)--(3,0,0)--cycle;
\end{scope}
\end{tikzpicture}
\end{align*}

Suppose you are given a reverse plane partition $\pi$ of shape $\lambda$ and a rim-hook $h$ of $\lambda$.
To insert $h$ into $\pi$ first try placing $h$ on top of $\pi$ such that the ends of $h$ align with the appropriate cells of the shape of $\pi$.
If the result is a reverse plane partition, you are done!

\begin{align*}
\begin{tikzpicture}[scale=.6,tdplot_main_coords]
\begin{scope}
\begin{scope}[xshift=0cm,yshift=-2cm]
\draw[line width=2pt,->](.5,.5,1.5)--(.5,.5,0);
\end{scope}
\begin{scope}[xshift=0cm,yshift=0cm]
\clip(1,0,0)--(1,2,0)--(0,2,0)--(0,2,1)--(0,1,1)--(0,1,1)--(0,0,1)--(1,0,1)--cycle;
\draw[line width=1pt,fill=red]
(0,0,1)--(1,0,1)--(1,1,1)--(0,1,1)--cycle
(0,1,1)--(1,1,1)--(1,2,1)--(0,2,1)--cycle
;
\draw[line width=1pt,fill=black!20]
(1,0,1)--(1,0,0)--(1,1,0)--(1,1,1)--cycle
(1,1,1)--(1,1,0)--(1,2,0)--(1,2,1)--cycle
;
\draw[line width=1pt,fill=black!50]
(0,2,1)--(1,2,1)--(1,2,0)--(0,2,0)--cycle
;
\draw[line width=2pt](1,0,0)--(1,2,0)--(0,2,0)--(0,2,1)--(0,1,1)--(0,1,1)--(0,0,1)--(1,0,1)--cycle;
\end{scope}
\begin{scope}[yshift=-44mm]
\begin{scope}
\clip(3,0,0)--(3,3,0)--(0,3,0)--(0,3,1)--(0,1,1)--(0,1,1)--(0,0,1)--(3,0,1)--cycle;
\draw[line width=1pt,fill=black!5]
(0,0,1)--(1,0,1)--(1,1,1)--(0,1,1)--cycle
(0,1,1)--(1,1,1)--(1,2,1)--(0,2,1)--cycle
(0,2,1)--(1,2,1)--(1,3,1)--(0,3,1)--cycle
(1,0,1)--(2,0,1)--(2,1,1)--(1,1,1)--cycle
(1,1,0)--(2,1,0)--(2,2,0)--(1,2,0)--cycle
(1,2,0)--(2,2,0)--(2,3,0)--(1,3,0)--cycle
(2,0,1)--(3,0,1)--(3,1,1)--(2,1,1)--cycle
(2,1,0)--(3,1,0)--(3,2,0)--(2,2,0)--cycle
(2,2,0)--(3,2,0)--(3,3,0)--(2,3,0)--cycle
;
\draw[line width=1pt,fill=black!20]
(1,1,1)--(1,1,0)--(1,2,0)--(1,2,1)--cycle
(1,2,1)--(1,2,0)--(1,3,0)--(1,3,1)--cycle
(3,0,1)--(3,0,0)--(3,1,0)--(3,1,1)--cycle
;
\draw[line width=1pt,fill=black!50]
(0,3,1)--(1,3,1)--(1,3,0)--(0,3,0)--cycle
(1,1,1)--(2,1,1)--(2,1,0)--(1,1,0)--cycle
(2,1,1)--(3,1,1)--(3,1,0)--(2,1,0)--cycle
;
\draw[line width=2pt](3,0,0)--(3,3,0)--(0,3,0)--(0,3,1)--(0,1,1)--(0,1,1)--(0,0,1)--(3,0,1)--cycle;
\end{scope}
\begin{scope}[xshift=5cm]
\clip(3,0,0)--(3,3,0)--(0,3,0)--(0,3,1)--(0,2,1)--(0,2,2)--(0,1,2)--(0,1,2)--(0,0,2)--(1,0,2)--(1,0,1)--(3,0,1)--cycle;
\draw[line width=1pt,fill=black!5]
(0,2,1)--(1,2,1)--(1,3,1)--(0,3,1)--cycle
(1,0,1)--(2,0,1)--(2,1,1)--(1,1,1)--cycle
(1,1,0)--(2,1,0)--(2,2,0)--(1,2,0)--cycle
(1,2,0)--(2,2,0)--(2,3,0)--(1,3,0)--cycle
(2,0,1)--(3,0,1)--(3,1,1)--(2,1,1)--cycle
(2,1,0)--(3,1,0)--(3,2,0)--(2,2,0)--cycle
(2,2,0)--(3,2,0)--(3,3,0)--(2,3,0)--cycle
;
\draw[line width=1pt,fill=red]
(0,0,2)--(1,0,2)--(1,1,2)--(0,1,2)--cycle
(0,1,2)--(1,1,2)--(1,2,2)--(0,2,2)--cycle
;
\draw[line width=1pt,fill=black!20]
(1,0,2)--(1,0,1)--(1,1,1)--(1,1,2)--cycle
(1,1,2)--(1,1,1)--(1,2,1)--(1,2,2)--cycle
(1,1,1)--(1,1,0)--(1,2,0)--(1,2,1)--cycle
(1,2,1)--(1,2,0)--(1,3,0)--(1,3,1)--cycle
(3,0,1)--(3,0,0)--(3,1,0)--(3,1,1)--cycle
;
\draw[line width=1pt,fill=black!50]
(0,2,2)--(1,2,2)--(1,2,1)--(0,2,1)--cycle
(0,3,1)--(1,3,1)--(1,3,0)--(0,3,0)--cycle
(1,1,1)--(2,1,1)--(2,1,0)--(1,1,0)--cycle
(2,1,1)--(3,1,1)--(3,1,0)--(2,1,0)--cycle
;
\draw[line width=2pt](3,0,0)--(3,3,0)--(0,3,0)--(0,3,1)--(0,2,1)--(0,2,2)--(0,1,2)--(0,1,2)--(0,0,2)--(1,0,2)--(1,0,1)--(3,0,1)--cycle;
\end{scope}
\end{scope}
\end{scope}
\begin{scope}[xshift=15cm]
\begin{scope}[xshift=0cm,yshift=-2cm]
\draw[line width=2pt,->](.5,.5,1.5)--(.5,.5,0);
\end{scope}
\begin{scope}[xshift=0cm,yshift=0cm]
\clip(2,0,0)--(2,1,0)--(1,1,0)--(1,2,0)--(0,2,0)--(0,2,1)--(0,1,1)--(0,1,1)--(0,0,1)--(2,0,1)--cycle;
\draw[line width=1pt,fill=green]
(0,0,1)--(1,0,1)--(1,1,1)--(0,1,1)--cycle
(0,1,1)--(1,1,1)--(1,2,1)--(0,2,1)--cycle
(1,0,1)--(2,0,1)--(2,1,1)--(1,1,1)--cycle
;
\draw[line width=1pt,fill=black!20]
(1,1,1)--(1,1,0)--(1,2,0)--(1,2,1)--cycle
(2,0,1)--(2,0,0)--(2,1,0)--(2,1,1)--cycle
;
\draw[line width=1pt,fill=black!50]
(0,2,1)--(1,2,1)--(1,2,0)--(0,2,0)--cycle
(1,1,1)--(2,1,1)--(2,1,0)--(1,1,0)--cycle
;
\draw[line width=2pt](2,0,0)--(2,1,0)--(1,1,0)--(1,2,0)--(0,2,0)--(0,2,1)--(0,1,1)--(0,1,1)--(0,0,1)--(2,0,1)--cycle;
\end{scope}
\begin{scope}[yshift=-44mm]
\begin{scope}[xshift=0cm,yshift=0cm]
\clip(3,0,0)--(3,3,0)--(0,3,0)--(0,3,1)--(0,1,1)--(0,1,1)--(0,0,1)--(3,0,1)--cycle;
\draw[line width=1pt,fill=black!5]
(0,0,1)--(1,0,1)--(1,1,1)--(0,1,1)--cycle
(0,1,1)--(1,1,1)--(1,2,1)--(0,2,1)--cycle
(0,2,1)--(1,2,1)--(1,3,1)--(0,3,1)--cycle
(1,2,0)--(2,2,0)--(2,3,0)--(1,3,0)--cycle
(2,1,0)--(3,1,0)--(3,2,0)--(2,2,0)--cycle
(2,2,0)--(3,2,0)--(3,3,0)--(2,3,0)--cycle
(1,0,1)--(2,0,1)--(2,1,1)--(1,1,1)--cycle
(2,0,1)--(3,0,1)--(3,1,1)--(2,1,1)--cycle
(1,1,1)--(2,1,1)--(2,2,1)--(1,2,1)--cycle
;
\draw[line width=1pt,fill=black!20]
(1,2,1)--(1,2,0)--(1,3,0)--(1,3,1)--cycle
(2,1,1)--(2,1,0)--(2,2,0)--(2,2,1)--cycle
(3,0,1)--(3,0,0)--(3,1,0)--(3,1,1)--cycle
;
\draw[line width=1pt,fill=black!50]
(0,3,1)--(1,3,1)--(1,3,0)--(0,3,0)--cycle
(1,2,1)--(2,2,1)--(2,2,0)--(1,2,0)--cycle
(2,1,1)--(3,1,1)--(3,1,0)--(2,1,0)--cycle
;
\draw[line width=2pt](3,0,0)--(3,3,0)--(0,3,0)--(0,3,1)--(0,1,1)--(0,1,1)--(0,0,1)--(3,0,1)--cycle;
\end{scope}
\begin{scope}[xshift=5cm,yshift=0cm]
\clip(3,0,0)--(3,3,0)--(0,3,0)--(0,3,1)--(0,2,1)--(0,2,2)--(0,1,2)--(0,1,2)--(0,0,2)--(2,0,2)--(2,0,1)--(3,0,1)--cycle;
\draw[line width=1pt,fill=black!5]
(0,2,1)--(1,2,1)--(1,3,1)--(0,3,1)--cycle
(1,1,1)--(2,1,1)--(2,2,1)--(1,2,1)--cycle
(1,2,0)--(2,2,0)--(2,3,0)--(1,3,0)--cycle
(2,0,1)--(3,0,1)--(3,1,1)--(2,1,1)--cycle
(2,1,0)--(3,1,0)--(3,2,0)--(2,2,0)--cycle
(2,2,0)--(3,2,0)--(3,3,0)--(2,3,0)--cycle
;
\draw[line width=1pt,fill=green]
(0,0,2)--(1,0,2)--(1,1,2)--(0,1,2)--cycle
(0,1,2)--(1,1,2)--(1,2,2)--(0,2,2)--cycle
(1,0,2)--(2,0,2)--(2,1,2)--(1,1,2)--cycle
;
\draw[line width=1pt,fill=black!20]
(1,1,2)--(1,1,1)--(1,2,1)--(1,2,2)--cycle
(1,2,1)--(1,2,0)--(1,3,0)--(1,3,1)--cycle
(2,0,2)--(2,0,1)--(2,1,1)--(2,1,2)--cycle
(2,1,1)--(2,1,0)--(2,2,0)--(2,2,1)--cycle
(3,0,1)--(3,0,0)--(3,1,0)--(3,1,1)--cycle
;
\draw[line width=1pt,fill=black!50]
(0,2,2)--(1,2,2)--(1,2,1)--(0,2,1)--cycle
(0,3,1)--(1,3,1)--(1,3,0)--(0,3,0)--cycle
(1,1,2)--(2,1,2)--(2,1,1)--(1,1,1)--cycle
(1,2,1)--(2,2,1)--(2,2,0)--(1,2,0)--cycle
(2,1,1)--(3,1,1)--(3,1,0)--(2,1,0)--cycle
;
\draw[line width=2pt](3,0,0)--(3,3,0)--(0,3,0)--(0,3,1)--(0,2,1)--(0,2,2)--(0,1,2)--(0,1,2)--(0,0,2)--(2,0,2)--(2,0,1)--(3,0,1)--cycle;
\end{scope}
\end{scope}
\end{scope}
\end{tikzpicture}
\end{align*}

If you are not so lucky the rim-hook might not fit as easily, that is, the resulting arrangement of cubes might have holes.
In this case try the following:
Cut off the maximum number of parts that can be inserted in a single layer of the reverse plane partition and insert this initial segment of the rim-hook.
Then shift the remainder of $h$ diagonally by one cell and try to insert it in the new position.

\begin{align*}
\begin{tikzpicture}[scale=.6,tdplot_main_coords]
\begin{scope}
\begin{scope}[xshift=0cm,yshift=-2cm]
\draw[line width=2pt,->](.5,.5,1.5)--(.5,.5,0);
\end{scope}
\begin{scope}[xshift=0cm,yshift=0cm]
\clip(3,0,0)--(3,1,0)--(0,1,0)--(0,1,1)--(0,0,1)--(3,0,1)--cycle;
\draw[line width=1pt,fill=green]
(0,0,1)--(1,0,1)--(1,1,1)--(0,1,1)--cycle
;
\draw[line width=1pt,fill=cyan]
(1,0,1)--(2,0,1)--(2,1,1)--(1,1,1)--cycle
(2,0,1)--(3,0,1)--(3,1,1)--(2,1,1)--cycle
;
\draw[line width=1pt,fill=black!20]
(3,0,1)--(3,0,0)--(3,1,0)--(3,1,1)--cycle
;
\draw[line width=1pt,fill=black!50]
(0,1,1)--(1,1,1)--(1,1,0)--(0,1,0)--cycle
(1,1,1)--(2,1,1)--(2,1,0)--(1,1,0)--cycle
(2,1,1)--(3,1,1)--(3,1,0)--(2,1,0)--cycle
;
\draw[line width=2pt](3,0,0)--(3,1,0)--(0,1,0)--(0,1,1)--(0,0,1)--(3,0,1)--cycle;
\end{scope}
\begin{scope}[yshift=-44mm]
\begin{scope}[xshift=0cm,yshift=0cm]
\clip(3,0,0)--(3,3,0)--(0,3,0)--(0,3,1)--(0,1,1)--(0,1,1)--(0,0,1)--(1,0,1)--(1,0,0)--(3,0,0)--cycle;
\draw[line width=1pt,fill=black!5]
(0,0,1)--(1,0,1)--(1,1,1)--(0,1,1)--cycle
(0,1,1)--(1,1,1)--(1,2,1)--(0,2,1)--cycle
(0,2,1)--(1,2,1)--(1,3,1)--(0,3,1)--cycle
(1,0,0)--(2,0,0)--(2,1,0)--(1,1,0)--cycle
(1,1,0)--(2,1,0)--(2,2,0)--(1,2,0)--cycle
(1,2,0)--(2,2,0)--(2,3,0)--(1,3,0)--cycle
(2,0,0)--(3,0,0)--(3,1,0)--(2,1,0)--cycle
(2,1,0)--(3,1,0)--(3,2,0)--(2,2,0)--cycle
(2,2,0)--(3,2,0)--(3,3,0)--(2,3,0)--cycle
;
\draw[line width=1pt,fill=black!20]
(1,0,1)--(1,0,0)--(1,1,0)--(1,1,1)--cycle
(1,1,1)--(1,1,0)--(1,2,0)--(1,2,1)--cycle
(1,2,1)--(1,2,0)--(1,3,0)--(1,3,1)--cycle
;
\draw[line width=1pt,fill=black!50]
(0,3,1)--(1,3,1)--(1,3,0)--(0,3,0)--cycle
;
\draw[line width=2pt](3,0,0)--(3,3,0)--(0,3,0)--(0,3,1)--(0,1,1)--(0,1,1)--(0,0,1)--(1,0,1)--(1,0,0)--(3,0,0)--cycle;
\end{scope}
\begin{scope}[xshift=5cm,yshift=0cm]
\clip(3,0,0)--(3,3,0)--(0,3,0)--(0,3,1)--(0,1,1)--(0,1,1)--(0,0,1)--(3,0,1)--cycle;
\draw[line width=1pt,fill=black!5]
(0,0,1)--(1,0,1)--(1,1,1)--(0,1,1)--cycle
(0,1,1)--(1,1,1)--(1,2,1)--(0,2,1)--cycle
(0,2,1)--(1,2,1)--(1,3,1)--(0,3,1)--cycle
(1,2,0)--(2,2,0)--(2,3,0)--(1,3,0)--cycle
(2,1,0)--(3,1,0)--(3,2,0)--(2,2,0)--cycle
(2,2,0)--(3,2,0)--(3,3,0)--(2,3,0)--cycle
;
\draw[line width=1pt,fill=cyan]
(1,0,1)--(2,0,1)--(2,1,1)--(1,1,1)--cycle
(2,0,1)--(3,0,1)--(3,1,1)--(2,1,1)--cycle
;
\draw[line width=1pt,fill=green]
(1,1,1)--(2,1,1)--(2,2,1)--(1,2,1)--cycle
;
\draw[line width=1pt,fill=black!20]
(1,2,1)--(1,2,0)--(1,3,0)--(1,3,1)--cycle
(2,1,1)--(2,1,0)--(2,2,0)--(2,2,1)--cycle
(3,0,1)--(3,0,0)--(3,1,0)--(3,1,1)--cycle
;
\draw[line width=1pt,fill=black!50]
(0,3,1)--(1,3,1)--(1,3,0)--(0,3,0)--cycle
(1,2,1)--(2,2,1)--(2,2,0)--(1,2,0)--cycle
(2,1,1)--(3,1,1)--(3,1,0)--(2,1,0)--cycle
;
\draw[line width=2pt](3,0,0)--(3,3,0)--(0,3,0)--(0,3,1)--(0,1,1)--(0,1,1)--(0,0,1)--(3,0,1)--cycle;
\end{scope}
\end{scope}
\end{scope}
\begin{scope}[xshift=15cm]
\begin{scope}[xshift=0cm,yshift=-2cm]
\draw[line width=2pt,->](.5,.5,1.5)--(.5,.5,0);
\end{scope}
\begin{scope}[xshift=0cm,yshift=0cm]
\clip(2,0,0)--(2,1,0)--(1,1,0)--(1,2,0)--(0,2,0)--(0,2,1)--(0,1,1)--(0,1,1)--(0,0,1)--(2,0,1)--cycle;
\draw[line width=1pt,fill=red]
(1,0,1)--(2,0,1)--(2,1,1)--(1,1,1)--cycle
;
\draw[line width=1pt,fill=yellow]
(0,0,1)--(1,0,1)--(1,1,1)--(0,1,1)--cycle
(0,1,1)--(1,1,1)--(1,2,1)--(0,2,1)--cycle
;
\draw[line width=1pt,fill=black!20]
(1,1,1)--(1,1,0)--(1,2,0)--(1,2,1)--cycle
(2,0,1)--(2,0,0)--(2,1,0)--(2,1,1)--cycle
;
\draw[line width=1pt,fill=black!50]
(0,2,1)--(1,2,1)--(1,2,0)--(0,2,0)--cycle
(1,1,1)--(2,1,1)--(2,1,0)--(1,1,0)--cycle
;
\draw[line width=2pt](2,0,0)--(2,1,0)--(1,1,0)--(1,2,0)--(0,2,0)--(0,2,1)--(0,1,1)--(0,1,1)--(0,0,1)--(2,0,1)--cycle;
\end{scope}
\begin{scope}[yshift=-44mm]
\begin{scope}[xshift=0cm,yshift=0cm]
\clip(3,0,0)--(3,3,0)--(0,3,0)--(0,3,1)--(0,1,1)--(0,1,1)--(0,0,1)--(1,0,1)--(1,0,0)--(3,0,0)--cycle;
\draw[line width=1pt,fill=black!5]
(0,0,1)--(1,0,1)--(1,1,1)--(0,1,1)--cycle
(0,1,1)--(1,1,1)--(1,2,1)--(0,2,1)--cycle
(0,2,1)--(1,2,1)--(1,3,1)--(0,3,1)--cycle
(1,0,0)--(2,0,0)--(2,1,0)--(1,1,0)--cycle
(1,1,0)--(2,1,0)--(2,2,0)--(1,2,0)--cycle
(1,2,0)--(2,2,0)--(2,3,0)--(1,3,0)--cycle
(2,0,0)--(3,0,0)--(3,1,0)--(2,1,0)--cycle
(2,1,0)--(3,1,0)--(3,2,0)--(2,2,0)--cycle
(2,2,0)--(3,2,0)--(3,3,0)--(2,3,0)--cycle
;
\draw[line width=1pt,fill=black!20]
(1,0,1)--(1,0,0)--(1,1,0)--(1,1,1)--cycle
(1,1,1)--(1,1,0)--(1,2,0)--(1,2,1)--cycle
(1,2,1)--(1,2,0)--(1,3,0)--(1,3,1)--cycle
;
\draw[line width=1pt,fill=black!50]
(0,3,1)--(1,3,1)--(1,3,0)--(0,3,0)--cycle
;
\draw[line width=2pt](3,0,0)--(3,3,0)--(0,3,0)--(0,3,1)--(0,1,1)--(0,1,1)--(0,0,1)--(1,0,1)--(1,0,0)--(3,0,0)--cycle;
\end{scope}
\begin{scope}[xshift=5cm,yshift=0cm]
\clip(3,0,0)--(3,3,0)--(0,3,0)--(0,3,1)--(0,1,1)--(0,1,1)--(0,0,1)--(2,0,1)--(2,0,0)--(3,0,0)--cycle;
\draw[line width=1pt,fill=black!5]
(0,0,1)--(1,0,1)--(1,1,1)--(0,1,1)--cycle
(0,1,1)--(1,1,1)--(1,2,1)--(0,2,1)--cycle
(0,2,1)--(1,2,1)--(1,3,1)--(0,3,1)--cycle
(2,0,0)--(3,0,0)--(3,1,0)--(2,1,0)--cycle
(2,1,0)--(3,1,0)--(3,2,0)--(2,2,0)--cycle
(2,2,0)--(3,2,0)--(3,3,0)--(2,3,0)--cycle
;
\draw[line width=1pt,fill=yellow]
(1,1,1)--(2,1,1)--(2,2,1)--(1,2,1)--cycle
(1,2,1)--(2,2,1)--(2,3,1)--(1,3,1)--cycle
;
\draw[line width=1pt,fill=red]
(1,0,1)--(2,0,1)--(2,1,1)--(1,1,1)--cycle
;
\draw[line width=1pt,fill=black!20]
(2,0,1)--(2,0,0)--(2,1,0)--(2,1,1)--cycle
(2,1,1)--(2,1,0)--(2,2,0)--(2,2,1)--cycle
(2,2,1)--(2,2,0)--(2,3,0)--(2,3,1)--cycle
;
\draw[line width=1pt,fill=black!50]
(0,3,1)--(1,3,1)--(1,3,0)--(0,3,0)--cycle
(1,3,1)--(2,3,1)--(2,3,0)--(1,3,0)--cycle
;
\draw[line width=2pt](3,0,0)--(3,3,0)--(0,3,0)--(0,3,1)--(0,1,1)--(0,1,1)--(0,0,1)--(2,0,1)--(2,0,0)--(3,0,0)--cycle;
\end{scope}
\end{scope}
\end{scope}
\end{tikzpicture}
\end{align*}

Note that it is allowed to insert the remainder of $h$ in a different layer of $\pi$.
It is also allowed to cut and shift multiple times if needed.

\begin{align*}
\begin{tikzpicture}[scale=.6,tdplot_main_coords]
\begin{scope}
\begin{scope}[xshift=0cm,yshift=-2cm]
\draw[line width=2pt,->](.5,.5,1.5)--(.5,.5,0);
\end{scope}
\begin{scope}[xshift=0cm,yshift=0cm]
\clip(2,0,0)--(2,1,0)--(0,1,0)--(0,1,1)--(0,0,1)--(2,0,1)--cycle;
\draw[line width=1pt,fill=red]
(0,0,1)--(1,0,1)--(1,1,1)--(0,1,1)--cycle
;
\draw[line width=1pt,fill=yellow]
(1,0,1)--(2,0,1)--(2,1,1)--(1,1,1)--cycle
;
\draw[line width=1pt,fill=black!20]
(2,0,1)--(2,0,0)--(2,1,0)--(2,1,1)--cycle
;
\draw[line width=1pt,fill=black!50]
(0,1,1)--(1,1,1)--(1,1,0)--(0,1,0)--cycle
(1,1,1)--(2,1,1)--(2,1,0)--(1,1,0)--cycle
;
\draw[line width=2pt](2,0,0)--(2,1,0)--(0,1,0)--(0,1,1)--(0,0,1)--(2,0,1)--cycle;
\end{scope}
\begin{scope}[yshift=-44mm]
\begin{scope}[xshift=0cm,yshift=0cm]
\clip(3,0,0)--(3,3,0)--(0,3,0)--(0,3,1)--(0,2,1)--(0,2,2)--(0,1,2)--(0,1,2)--(0,0,2)--(1,0,2)--(1,0,1)--(3,0,1)--cycle;
\draw[line width=1pt,fill=black!5]
(0,0,2)--(1,0,2)--(1,1,2)--(0,1,2)--cycle
(0,1,2)--(1,1,2)--(1,2,2)--(0,2,2)--cycle
(0,2,1)--(1,2,1)--(1,3,1)--(0,3,1)--cycle
(1,0,1)--(2,0,1)--(2,1,1)--(1,1,1)--cycle
(1,1,0)--(2,1,0)--(2,2,0)--(1,2,0)--cycle
(1,2,0)--(2,2,0)--(2,3,0)--(1,3,0)--cycle
(2,0,1)--(3,0,1)--(3,1,1)--(2,1,1)--cycle
(2,1,0)--(3,1,0)--(3,2,0)--(2,2,0)--cycle
(2,2,0)--(3,2,0)--(3,3,0)--(2,3,0)--cycle
;
\draw[line width=1pt,fill=black!20]
(1,0,2)--(1,0,1)--(1,1,1)--(1,1,2)--cycle
(1,1,2)--(1,1,1)--(1,2,1)--(1,2,2)--cycle
(1,1,1)--(1,1,0)--(1,2,0)--(1,2,1)--cycle
(1,2,1)--(1,2,0)--(1,3,0)--(1,3,1)--cycle
(3,0,1)--(3,0,0)--(3,1,0)--(3,1,1)--cycle
;
\draw[line width=1pt,fill=black!50]
(0,2,2)--(1,2,2)--(1,2,1)--(0,2,1)--cycle
(0,3,1)--(1,3,1)--(1,3,0)--(0,3,0)--cycle
(1,1,1)--(2,1,1)--(2,1,0)--(1,1,0)--cycle
(2,1,1)--(3,1,1)--(3,1,0)--(2,1,0)--cycle
;
\draw[line width=2pt](3,0,0)--(3,3,0)--(0,3,0)--(0,3,1)--(0,2,1)--(0,2,2)--(0,1,2)--(0,1,2)--(0,0,2)--(1,0,2)--(1,0,1)--(3,0,1)--cycle;
\end{scope}
\begin{scope}[xshift=5cm,yshift=0cm]
\clip(3,0,0)--(3,3,0)--(0,3,0)--(0,3,1)--(0,2,1)--(0,2,2)--(0,1,2)--(0,1,2)--(0,0,2)--(2,0,2)--(2,0,1)--(3,0,1)--cycle;
\draw[line width=1pt,fill=black!5]
(0,0,2)--(1,0,2)--(1,1,2)--(0,1,2)--cycle
(0,1,2)--(1,1,2)--(1,2,2)--(0,2,2)--cycle
(0,2,1)--(1,2,1)--(1,3,1)--(0,3,1)--cycle
(1,2,0)--(2,2,0)--(2,3,0)--(1,3,0)--cycle
(2,0,1)--(3,0,1)--(3,1,1)--(2,1,1)--cycle
(2,1,0)--(3,1,0)--(3,2,0)--(2,2,0)--cycle
(2,2,0)--(3,2,0)--(3,3,0)--(2,3,0)--cycle
;
\draw[line width=1pt,fill=yellow]
(1,0,2)--(2,0,2)--(2,1,2)--(1,1,2)--cycle
;
\draw[line width=1pt,fill=red]
(1,1,1)--(2,1,1)--(2,2,1)--(1,2,1)--cycle
;
\draw[line width=1pt,fill=black!20]
(1,1,2)--(1,1,1)--(1,2,1)--(1,2,2)--cycle
(1,2,1)--(1,2,0)--(1,3,0)--(1,3,1)--cycle
(2,0,2)--(2,0,1)--(2,1,1)--(2,1,2)--cycle
(2,1,1)--(2,1,0)--(2,2,0)--(2,2,1)--cycle
(3,0,1)--(3,0,0)--(3,1,0)--(3,1,1)--cycle
;
\draw[line width=1pt,fill=black!50]
(0,2,2)--(1,2,2)--(1,2,1)--(0,2,1)--cycle
(0,3,1)--(1,3,1)--(1,3,0)--(0,3,0)--cycle
(1,1,2)--(2,1,2)--(2,1,1)--(1,1,1)--cycle
(1,2,1)--(2,2,1)--(2,2,0)--(1,2,0)--cycle
(2,1,1)--(3,1,1)--(3,1,0)--(2,1,0)--cycle
;
\draw[line width=2pt](3,0,0)--(3,3,0)--(0,3,0)--(0,3,1)--(0,2,1)--(0,2,2)--(0,1,2)--(0,1,2)--(0,0,2)--(2,0,2)--(2,0,1)--(3,0,1)--cycle;
\end{scope}
\end{scope}
\end{scope}
\begin{scope}[xshift=15cm]
\begin{scope}[xshift=0cm,yshift=-2cm]
\draw[line width=2pt,->](.5,.5,1.5)--(.5,.5,0);
\end{scope}
\begin{scope}[xshift=0cm,yshift=0cm]
\clip(3,0,0)--(3,1,0)--(0,1,0)--(0,1,1)--(0,0,1)--(3,0,1)--cycle;
\draw[line width=1pt,fill=yellow]
(0,0,1)--(1,0,1)--(1,1,1)--(0,1,1)--cycle
;
\draw[line width=1pt,fill=cyan]
(1,0,1)--(2,0,1)--(2,1,1)--(1,1,1)--cycle
;
\draw[line width=1pt,fill=green]
(2,0,1)--(3,0,1)--(3,1,1)--(2,1,1)--cycle
;
\draw[line width=1pt,fill=black!20]
(3,0,1)--(3,0,0)--(3,1,0)--(3,1,1)--cycle
;
\draw[line width=1pt,fill=black!50]
(0,1,1)--(1,1,1)--(1,1,0)--(0,1,0)--cycle
(1,1,1)--(2,1,1)--(2,1,0)--(1,1,0)--cycle
(2,1,1)--(3,1,1)--(3,1,0)--(2,1,0)--cycle
;
\draw[line width=2pt](3,0,0)--(3,1,0)--(0,1,0)--(0,1,1)--(0,0,1)--(3,0,1)--cycle;
\end{scope}
\begin{scope}[yshift=-44mm]
\begin{scope}[xshift=0cm,yshift=0cm]
\clip(3,0,0)--(3,3,0)--(0,3,0)--(0,3,1)--(0,1,1)--(0,1,1)--(0,0,1)--(2,0,1)--(2,0,0)--(3,0,0)--cycle;
\draw[line width=1pt,fill=black!5]
(0,0,1)--(1,0,1)--(1,1,1)--(0,1,1)--cycle
(0,1,1)--(1,1,1)--(1,2,1)--(0,2,1)--cycle
(0,2,1)--(1,2,1)--(1,3,1)--(0,3,1)--cycle
(2,0,0)--(3,0,0)--(3,1,0)--(2,1,0)--cycle
(2,1,0)--(3,1,0)--(3,2,0)--(2,2,0)--cycle
(2,2,0)--(3,2,0)--(3,3,0)--(2,3,0)--cycle
(1,1,1)--(2,1,1)--(2,2,1)--(1,2,1)--cycle
(1,2,1)--(2,2,1)--(2,3,1)--(1,3,1)--cycle
(1,0,1)--(2,0,1)--(2,1,1)--(1,1,1)--cycle
;
\draw[line width=1pt,fill=black!20]
(2,0,1)--(2,0,0)--(2,1,0)--(2,1,1)--cycle
(2,1,1)--(2,1,0)--(2,2,0)--(2,2,1)--cycle
(2,2,1)--(2,2,0)--(2,3,0)--(2,3,1)--cycle
;
\draw[line width=1pt,fill=black!50]
(0,3,1)--(1,3,1)--(1,3,0)--(0,3,0)--cycle
(1,3,1)--(2,3,1)--(2,3,0)--(1,3,0)--cycle
;
\draw[line width=2pt](3,0,0)--(3,3,0)--(0,3,0)--(0,3,1)--(0,1,1)--(0,1,1)--(0,0,1)--(2,0,1)--(2,0,0)--(3,0,0)--cycle;
\end{scope}
\begin{scope}[xshift=5cm,yshift=0cm]
\clip(3,0,0)--(3,3,0)--(0,3,0)--(0,3,1)--(0,1,1)--(0,1,1)--(0,0,1)--(3,0,1)--cycle;
\draw[line width=1pt,fill=black!5]
(0,0,1)--(1,0,1)--(1,1,1)--(0,1,1)--cycle
(0,1,1)--(1,1,1)--(1,2,1)--(0,2,1)--cycle
(0,2,1)--(1,2,1)--(1,3,1)--(0,3,1)--cycle
(1,0,1)--(2,0,1)--(2,1,1)--(1,1,1)--cycle
(1,1,1)--(2,1,1)--(2,2,1)--(1,2,1)--cycle
(1,2,1)--(2,2,1)--(2,3,1)--(1,3,1)--cycle
;
\draw[line width=1pt,fill=green]
(2,0,1)--(3,0,1)--(3,1,1)--(2,1,1)--cycle
;
\draw[line width=1pt,fill=cyan]
(2,1,1)--(3,1,1)--(3,2,1)--(2,2,1)--cycle
;
\draw[line width=1pt,fill=yellow]
(2,2,1)--(3,2,1)--(3,3,1)--(2,3,1)--cycle
;
\draw[line width=1pt,fill=black!20]
(3,0,1)--(3,0,0)--(3,1,0)--(3,1,1)--cycle
(3,1,1)--(3,1,0)--(3,2,0)--(3,2,1)--cycle
(3,2,1)--(3,2,0)--(3,3,0)--(3,3,1)--cycle
;
\draw[line width=1pt,fill=black!50]
(0,3,1)--(1,3,1)--(1,3,0)--(0,3,0)--cycle
(1,3,1)--(2,3,1)--(2,3,0)--(1,3,0)--cycle
(2,3,1)--(3,3,1)--(3,3,0)--(2,3,0)--cycle
;
\draw[line width=2pt](3,0,0)--(3,3,0)--(0,3,0)--(0,3,1)--(0,1,1)--(0,1,1)--(0,0,1)--(3,0,1)--cycle;
\end{scope}
\end{scope}
\end{scope}
\end{tikzpicture}
\end{align*}

If this (iterated) process yields a reverse plane partition in the end, it is denoted by $h*\pi$ and we say $h$ inserts into $\pi$.
On the other hand it might also happen that the procedure fails, as in the following examples.
In this case we say $h$ does not insert into $\pi$.
Note in particular the third example, in which insertion fails since we demand that each part of $h$ remains intact and cannot be cut in two.

\begin{align*}
\begin{tikzpicture}[scale=.6,tdplot_main_coords]
\begin{scope}[xshift=0cm]
\begin{scope}[xshift=0cm,yshift=-2cm]
\draw[line width=2pt,->](.5,.5,1.5)--(.5,.5,0);
\end{scope}
\begin{scope}[xshift=0cm,yshift=0cm]
\clip(3,0,0)--(3,1,0)--(1,1,0)--(1,2,0)--(0,2,0)--(0,2,1)--(0,1,1)--(0,1,1)--(0,0,1)--(3,0,1)--cycle;
\draw[line width=1pt,fill=black!5]
(0,0,1)--(1,0,1)--(1,1,1)--(0,1,1)--cycle
(0,1,1)--(1,1,1)--(1,2,1)--(0,2,1)--cycle
(1,0,1)--(2,0,1)--(2,1,1)--(1,1,1)--cycle
(2,0,1)--(3,0,1)--(3,1,1)--(2,1,1)--cycle
;
\draw[line width=1pt,fill=black!20]
(1,1,1)--(1,1,0)--(1,2,0)--(1,2,1)--cycle
(3,0,1)--(3,0,0)--(3,1,0)--(3,1,1)--cycle
;
\draw[line width=1pt,fill=black!50]
(0,2,1)--(1,2,1)--(1,2,0)--(0,2,0)--cycle
(1,1,1)--(2,1,1)--(2,1,0)--(1,1,0)--cycle
(2,1,1)--(3,1,1)--(3,1,0)--(2,1,0)--cycle
;
\draw[line width=2pt](3,0,0)--(3,1,0)--(1,1,0)--(1,2,0)--(0,2,0)--(0,2,1)--(0,1,1)--(0,1,1)--(0,0,1)--(3,0,1)--cycle;
\end{scope}
\begin{scope}[xshift=0cm,yshift=-44mm]
\clip(3,0,0)--(3,3,0)--(0,3,0)--(0,3,0)--(0,2,0)--(0,2,1)--(0,1,1)--(0,1,1)--(0,0,1)--(1,0,1)--(1,0,0)--(3,0,0)--cycle;
\draw[line width=1pt,fill=black!5]
(0,0,1)--(1,0,1)--(1,1,1)--(0,1,1)--cycle
(0,1,1)--(1,1,1)--(1,2,1)--(0,2,1)--cycle
(0,2,0)--(1,2,0)--(1,3,0)--(0,3,0)--cycle
(1,0,0)--(2,0,0)--(2,1,0)--(1,1,0)--cycle
(1,1,0)--(2,1,0)--(2,2,0)--(1,2,0)--cycle
(1,2,0)--(2,2,0)--(2,3,0)--(1,3,0)--cycle
(2,0,0)--(3,0,0)--(3,1,0)--(2,1,0)--cycle
(2,1,0)--(3,1,0)--(3,2,0)--(2,2,0)--cycle
(2,2,0)--(3,2,0)--(3,3,0)--(2,3,0)--cycle
;
\draw[line width=1pt,fill=black!20]
(1,0,1)--(1,0,0)--(1,1,0)--(1,1,1)--cycle
(1,1,1)--(1,1,0)--(1,2,0)--(1,2,1)--cycle
;
\draw[line width=1pt,fill=black!50]
(0,2,1)--(1,2,1)--(1,2,0)--(0,2,0)--cycle
;
\draw[line width=2pt](3,0,0)--(3,3,0)--(0,3,0)--(0,3,0)--(0,2,0)--(0,2,1)--(0,1,1)--(0,1,1)--(0,0,1)--(1,0,1)--(1,0,0)--(3,0,0)--cycle;
\end{scope}
\end{scope}
\begin{scope}[xshift=10cm]
\begin{scope}[xshift=0cm,yshift=-2cm]
\draw[line width=2pt,->](.5,.5,1.5)--(.5,.5,0);
\end{scope}
\begin{scope}[xshift=0cm,yshift=0cm]
\clip(3,0,0)--(3,1,0)--(0,1,0)--(0,1,1)--(0,0,1)--(3,0,1)--cycle;
\draw[line width=1pt,fill=black!5]
(0,0,1)--(1,0,1)--(1,1,1)--(0,1,1)--cycle
(1,0,1)--(2,0,1)--(2,1,1)--(1,1,1)--cycle
(2,0,1)--(3,0,1)--(3,1,1)--(2,1,1)--cycle
;
\draw[line width=1pt,fill=black!20]
(3,0,1)--(3,0,0)--(3,1,0)--(3,1,1)--cycle
;
\draw[line width=1pt,fill=black!50]
(0,1,1)--(1,1,1)--(1,1,0)--(0,1,0)--cycle
(1,1,1)--(2,1,1)--(2,1,0)--(1,1,0)--cycle
(2,1,1)--(3,1,1)--(3,1,0)--(2,1,0)--cycle
;
\draw[line width=2pt](3,0,0)--(3,1,0)--(0,1,0)--(0,1,1)--(0,0,1)--(3,0,1)--cycle;
\end{scope}
\begin{scope}[xshift=0cm,yshift=-44mm]
\clip(3,0,0)--(3,3,0)--(0,3,0)--(0,3,1)--(0,1,1)--(0,1,1)--(0,0,1)--(2,0,1)--(2,0,0)--(3,0,0)--cycle;
\draw[line width=1pt,fill=black!5]
(0,0,1)--(1,0,1)--(1,1,1)--(0,1,1)--cycle
(0,1,1)--(1,1,1)--(1,2,1)--(0,2,1)--cycle
(0,2,1)--(1,2,1)--(1,3,1)--(0,3,1)--cycle
(1,0,1)--(2,0,1)--(2,1,1)--(1,1,1)--cycle
(1,1,1)--(2,1,1)--(2,2,1)--(1,2,1)--cycle
(1,2,0)--(2,2,0)--(2,3,0)--(1,3,0)--cycle
(2,0,0)--(3,0,0)--(3,1,0)--(2,1,0)--cycle
(2,1,0)--(3,1,0)--(3,2,0)--(2,2,0)--cycle
(2,2,0)--(3,2,0)--(3,3,0)--(2,3,0)--cycle
;
\draw[line width=1pt,fill=black!20]
(1,2,1)--(1,2,0)--(1,3,0)--(1,3,1)--cycle
(2,0,1)--(2,0,0)--(2,1,0)--(2,1,1)--cycle
(2,1,1)--(2,1,0)--(2,2,0)--(2,2,1)--cycle
;
\draw[line width=1pt,fill=black!50]
(0,3,1)--(1,3,1)--(1,3,0)--(0,3,0)--cycle
(1,2,1)--(2,2,1)--(2,2,0)--(1,2,0)--cycle
;
\draw[line width=2pt](3,0,0)--(3,3,0)--(0,3,0)--(0,3,1)--(0,1,1)--(0,1,1)--(0,0,1)--(2,0,1)--(2,0,0)--(3,0,0)--cycle;
\end{scope}
\end{scope}
\begin{scope}[xshift=20cm]
\begin{scope}[xshift=0cm,yshift=-2cm]
\draw[line width=2pt,->](.5,.5,1.5)--(.5,.5,0);
\end{scope}
\begin{scope}[xshift=0cm,yshift=0cm]
\clip(1,0,0)--(1,2,0)--(0,2,0)--(0,2,1)--(0,1,1)--(0,1,1)--(0,0,1)--(1,0,1)--cycle;
\draw[line width=1pt,fill=black!5]
(0,0,1)--(1,0,1)--(1,1,1)--(0,1,1)--cycle
(0,1,1)--(1,1,1)--(1,2,1)--(0,2,1)--cycle
;
\draw[line width=1pt,fill=black!20]
(1,0,1)--(1,0,0)--(1,1,0)--(1,1,1)--cycle
(1,1,1)--(1,1,0)--(1,2,0)--(1,2,1)--cycle
;
\draw[line width=1pt,fill=black!50]
(0,2,1)--(1,2,1)--(1,2,0)--(0,2,0)--cycle
;
\draw[line width=2pt](1,0,0)--(1,2,0)--(0,2,0)--(0,2,1)--(0,1,1)--(0,1,1)--(0,0,1)--(1,0,1)--cycle;
\end{scope}
\begin{scope}[xshift=0cm,yshift=-44mm]
\clip(3,0,0)--(3,3,0)--(0,3,0)--(0,3,1)--(0,1,1)--(0,1,2)--(0,0,2)--(1,0,2)--(1,0,1)--(2,0,1)--(2,0,0)--(3,0,0)--cycle;
\draw[line width=1pt,fill=black!5]
(0,0,2)--(1,0,2)--(1,1,2)--(0,1,2)--cycle
(0,1,1)--(1,1,1)--(1,2,1)--(0,2,1)--cycle
(0,2,1)--(1,2,1)--(1,3,1)--(0,3,1)--cycle
(1,0,1)--(2,0,1)--(2,1,1)--(1,1,1)--cycle
(1,1,1)--(2,1,1)--(2,2,1)--(1,2,1)--cycle
(1,2,0)--(2,2,0)--(2,3,0)--(1,3,0)--cycle
(2,0,0)--(3,0,0)--(3,1,0)--(2,1,0)--cycle
(2,1,0)--(3,1,0)--(3,2,0)--(2,2,0)--cycle
(2,2,0)--(3,2,0)--(3,3,0)--(2,3,0)--cycle
;
\draw[line width=1pt,fill=black!20]
(1,0,2)--(1,0,1)--(1,1,1)--(1,1,2)--cycle
(1,2,1)--(1,2,0)--(1,3,0)--(1,3,1)--cycle
(2,0,1)--(2,0,0)--(2,1,0)--(2,1,1)--cycle
(2,1,1)--(2,1,0)--(2,2,0)--(2,2,1)--cycle
;
\draw[line width=1pt,fill=black!50]
(0,1,2)--(1,1,2)--(1,1,1)--(0,1,1)--cycle
(0,3,1)--(1,3,1)--(1,3,0)--(0,3,0)--cycle
(1,2,1)--(2,2,1)--(2,2,0)--(1,2,0)--cycle
;
\draw[line width=2pt](3,0,0)--(3,3,0)--(0,3,0)--(0,3,1)--(0,1,1)--(0,1,2)--(0,0,2)--(1,0,2)--(1,0,1)--(2,0,1)--(2,0,0)--(3,0,0)--cycle;
\end{scope}
\end{scope}
\end{tikzpicture}
\end{align*}

The main results of this extended abstract can be phrased as follows.
First of all every reverse plane partition can be built as described above using only bricks of rim-hook shape as building blocks.
Secondly, given a multi-set of bricks there is always a way to sort them (lexicographically) such that they can be successively inserted into the zero reverse plane partition.
Thirdly, each reverse plane partition can be built in a unique way such that all building blocks are inserted in lexicographic order.
Altogether we obtain a bijective correspondence between reverse plane partitions of a given shape and the multi-sets of rim-hooks of the same shape.

\section{Inserting rim-hooks}\label{Section:insert}

In this section we first fix the needed notation concerning partitions and reverse plane partitions.
Afterwards a precise formulation of the insertion algorithm described in Section~\ref{Section:blocks} is given.
Furthermore, we state the main results of this abstract in Theorems~\ref{Theorem:lexinsert} and~\ref{Theorem:lexfact}.

\smallskip
Let $\N=\{0,1,2,\dots\}$ denote the non-negative integers.
Moreover for $n\in\N$ set $[n]=\{1,\dots,n\}$.
A pair $u\in\Z^2$ is called \emph{cell}.
For each cell $u$ denote the \emph{North}, \emph{East}, \emph{South} and \emph{West neighbours} of $u$ by
\begin{align*}
\n u=(u_1-1,u_2),\qquad
\e u=(u_1,u_2+1),\qquad
\s u=(u_1+1,u_2),\qquad
\w u=(u_1,u_2-1).
\end{align*}
A \emph{partition} $\lambda$ is a weakly decreasing sequence $\lambda_1\geq\lambda_2\geq\dots\geq\lambda_r>0$ of positive integers.
The elements $\lambda_i$ are called \emph{parts} of the partition.
The number of positive parts is called the \emph{length} of the partition and is denoted by $\ell(\lambda)$.
We identify each partition with a set of cells $\lambda=\{(i,j):i\in[\ell(\lambda)],j\in[\lambda_{i}]\}$ called the \emph{Young diagram} of $\lambda$.

A \emph{reverse plane partition} of shape $\lambda$ is a map $\pi:\lambda\to\N$ such that $\pi(u)\leq\pi(\e u)$ and $\pi(u)\leq\pi(\s u)$ for all $u\in\lambda$.
By convention $\pi(u)=0$ if $u_1\leq0$ or $u_2\leq0$ and $\pi(u)=\infty$ if $u_1,u_2\geq1$ but $u\notin\lambda$.
Example \eqref{eq:0123-122-1} shows the partition $\lambda=(4,3,1)$, a reverse plane partition $\pi$ of shape $\lambda$ and the representation of $\pi$ as an arrangement of stacks of cubes.

The \emph{conjugate} of a partition $\lambda$ is the partition $\lambda'=\{(j,i):(i,j)\in\lambda\}$.
The \emph{hook} $H(u)$ of a cell $u\in\lambda$ consists of the cell $u$ itself and those cells $v\in\lambda$ that lie directly East of $u$ or directly South of $u$.
The \emph{hook-length} $h(u)=\lambda_i+\lambda_j'-i-j+1$ denotes the cardinality of $H(u)$.
For example, in the partition $\lambda=(4,3,1)$ above we have $H(1,2)=\{(1,2),(1,3),(1,4),(2,2)\}$ and thus $h(1,2)=4$.
%

The \emph{content} of a cell $u$ is defined as
\begin{align*}
c(u)=u_2-u_1.
\end{align*}
Equip $\lambda$ with two total orders: the \emph{reverse lexicographic order} $(\lambda,\leq)$ and the \emph{content order} $(\lambda,\trianglelefteq)$.
Let $u,v\in\lambda$ then $u\leq v$ if and only if $u_2>v_2$ or $u_2=v_2$ and $u_1\geq v_1$.
Moreover $u\trianglelefteq v$ if and only if $c(u)>c(v)$ or $c(u)=c(v)$ and $u_1\geq v_1$.
In the example below the reverse lexicographic order (left) and content order (right) are illustrated for our running example $\lambda=(4,3,1)$.

\begin{align*}
\begin{tikzpicture}[scale=.6]
\begin{scope}[xshift=0cm]
\draw
(0,1)--(1,1)
(0,2)--(3,2)
(1,1)--(1,3)
(2,1)--(2,3)
(3,2)--(3,3)
;
\draw[black!20,line width=2pt,->](.5,2.8)--(.5,.2);
\draw[black!20,line width=2pt,->](1.5,2.8)--(1.5,1.2);
\draw[black!20,line width=2pt,->](2.5,2.8)--(2.5,1.2);
\draw[black!20,line width=2pt,->](3.5,2.8)--(3.5,2.2)
;
\draw[line width=1pt](0,0)--(1,0)--(1,1)--(3,1)--(3,2)--(4,2)--(4,3)--(0,3)--cycle;
\draw[xshift=5mm,yshift=5mm]
(0,2)node{{$8$}}
(1,2)node{{$5$}}
(2,2)node{{$3$}}
(3,2)node{{$1$}}
(0,1)node{{$7$}}
(1,1)node{{$4$}}
(2,1)node{{$2$}}
(0,0)node{{$6$}};
\end{scope}
\begin{scope}[xshift=10cm]
\draw
(0,1)--(1,1)
(0,2)--(3,2)
(1,1)--(1,3)
(2,1)--(2,3)
(3,2)--(3,3)
;
\draw[black!20,line width=2pt,->](.2,.8)--(.8,.2);
\draw[black!20,line width=2pt,->](.2,1.8)--(.8,1.2);
\draw[black!20,line width=2pt,->](.2,2.8)--(1.8,1.2);
\draw[black!20,line width=2pt,->](1.2,2.8)--(2.8,1.2);
\draw[black!20,line width=2pt,->](2.2,2.8)--(2.8,2.2);
\draw[black!20,line width=2pt,->](3.2,2.8)--(3.8,2.2)
;
\draw[line width=1pt](0,0)--(1,0)--(1,1)--(3,1)--(3,2)--(4,2)--(4,3)--(0,3)--cycle;
\draw[xshift=5mm,yshift=5mm]
(0,2)node{{$6$}}
(1,2)node{{$4$}}
(2,2)node{{$2$}}
(3,2)node{{$1$}}
(0,1)node{{$7$}}
(1,1)node{{$5$}}
(2,1)node{{$3$}}
(0,0)node{{$8$}};
\end{scope}
\end{tikzpicture}
\end{align*}

A \emph{North-East-path} in $\lambda$ is a sequence $P=(u_0,u_1,\dots,u_{s})$ of cells $u_k\in\lambda$ such that $u_{k}\in\{\n u_{k-1},\e u_{k-1}\}$ for all $k\in[s]$.
We call $\ell(P)=s$ the \emph{length}, $\alpha(P)=u_0$ the \emph{head} and $\omega(P)=u_{s}$ the \emph{tail} of the path $P$.
Sometimes it is more convenient to consider South-West-paths instead.
A \emph{South-West-path} in $\lambda$ is a sequence $Q=(v_0,v_1,\dots,v_{s})$ of cells $v_k\in\lambda$ such that $v_k\in\{\s v_{k-1},\w v_{k-1}\}$ for all $k\in[s]$.
Denote by $P'=(u_{s},\dots,u_1,u_0)$ the \emph{reverse path} of $P$.
Clearly the reverse of a South-West-path is a North-East-path and vice versa.
Given a South-West-path $Q$ set $\ell(Q)=s$, $\alpha(Q)=v_s$ and $\omega(Q)=v_0$ such that all notions are independent of the fact whether $Q$ is regarded as a North-East-path or as a South-West-path.
That is, $\alpha(P)=\alpha(P')$, $\omega(P)=\omega(P')$ and so forth.

%

A \emph{rim-hook} of $\lambda$ is a North-East-path $h$ in $\lambda$ such that $\s\alpha(h)\notin\lambda$, $\e\omega(h)\notin\lambda$ and $\e\s u\notin\lambda$ for all $u\in h$.
For each cell $(i,j)\in\lambda$ there is a rim-hook $h$ with $\alpha(h)=(\lambda_j',j)$ and $\omega(h)=(i,\lambda_i)$.
This correspondence is a bijection between $\lambda$ and the set of rim-hooks of $\lambda$.
We use this correspondence to transfer the reverse lexicographic order on cells to rim-hooks.
Thereby for given rim-hooks $f$ and $h$ we have $f\leq h$ if and only if either $c(\alpha(f))>c(\alpha(h))$ or $c(\alpha(f))=c(\alpha(h))$ and $c(\omega(f))\leq c(\omega(h))$.
The example \eqref{eq:rimhooks} shows all rim-hooks of the partition $\lambda=(4,3,1)$ in reverse lexicographic order starting with the minimum.

A cell $u\in\lambda$ is called \emph{outer corner} if $\e u,\s u\notin\lambda$ and \emph{inner corner} if $\e u,\s u\in\lambda$ but $\e\s u\notin\lambda$.
Let $i_1,\dots,i_r$ be the contents of the inner corners of $\lambda$ and $o_1,\dots,o_{r+1}$ be the contents of the outer corners of $\lambda$ sorted such that
\begin{align*}
o_1<i_1<o_2<\dots<o_r<i_r<o_{r+1}.
\end{align*}
Divide $\lambda$ into four regions.
\begin{align*}
\begin{split}
\I&=\{u\in\lambda:c(u)=i_k\text{ for some }k\in[r]\}\\
\O&=\{u\in\lambda:c(u)=o_k\text{ for some }k\in[r+1]\}\\
\A&=\{u\in\lambda:c(u)<o_1\text{ or }i_k<c(u)<o_{k+1}\text{ for some }k\in[r]\}\\
\B&=\{u\in\lambda:o_k<c(u)<i_k\text{ for some }k\in[r]\text{ or }o_{r+1}<c(u)\}
\end{split}
\end{align*}
The example below shows the region $\I$ in yellow and the region $\O$ in orange.
The contents of the inner corners are $-2,2,4$ the contents of the outer corners are $-5,1,3,7$.

\begin{align*}
\begin{tikzpicture}[scale=.6]
\fill[orange]
(0,2)--(1,2)--(1,1)--(2,1)--(2,0)--(3,0)--(3,1)--(2,1)--(2,2)--(1,2)--(1,3)--(0,3)--cycle
(1,7)--(2,7)--(2,6)--(3,6)--(3,5)--(4,5)--(4,4)--(5,4)--(5,3)--(6,3)--(6,4)--(7,4)--(7,5)--(6,5)--(6,6)--(5,6)--(5,7)--(4,7)--(4,8)--(1,8)--cycle
(7,7)--(8,7)--(8,6)--(9,6)--(9,5)--(10,5)--(10,6)--(9,6)--(9,7)--(8,7)--(8,8)--(7,8)--cycle
;
\fill[yellow]
(0,5)--(1,5)--(1,4)--(2,4)--(2,3)--(3,3)--(3,4)--(2,4)--(2,5)--(1,5)--(1,6)--(0,6)--cycle
(2,7)--(3,7)--(3,6)--(4,6)--(4,5)--(5,5)--(5,4)--(6,4)--(6,5)--(5,5)--(5,6)--(4,6)--(4,7)--(3,7)--(3,8)--(2,8)--cycle
(4,7)--(5,7)--(5,6)--(6,6)--(6,5)--(7,5)--(7,6)--(6,6)--(6,7)--(5,7)--(5,8)--(4,8)--cycle
;
\draw
(0,1)--(3,1)
(0,2)--(3,2)
(0,3)--(3,3)
(0,4)--(6,4)
(0,5)--(7,5)
(0,6)--(10,6)
(0,7)--(10,7)
(1,8)--(1,0)
(2,8)--(2,0)
(3,8)--(3,3)
(4,8)--(4,3)
(5,8)--(5,3)
(6,8)--(6,4)
(7,8)--(7,5)
(8,8)--(8,5)
(9,8)--(9,5)
;
\draw[very thick]
(0,0)--(3,0)--(3,3)--(6,3)--(6,4)--(7,4)--(7,5)--(10,5)--(10,8)--(0,8)--cycle
;
\draw[xshift=5mm,yshift=5mm]
(2,0)node{{$-5$}}
(2,3)node{{$-2$}}
(5,3)node{{$1$}}
(5,4)node{{$2$}}
(6,4)node{{$3$}}
(6,5)node{{$4$}}
(9,5)node{{$7$}}
(0,0)node{{$\A$}}
(2,5)node{{$\A$}}
(6,6)node{{$\A$}}
(1,3)node{{$\B$}}
(9,7)node{{$\B$}}
;
\end{tikzpicture}
\end{align*}

The motivation for these definitions is as follows.
Note that $\alpha(h)\in\A\cup\O$ and $\omega(h)\in\O\cup\B$ for any rim-hook of $\lambda$.
More precisely if $h$ is a rim-hook of $\lambda$ and $u\in h$ with $u\in\I\cup\A$ then $\e u\in h$.
If $u\in h$ with $u\in\B\cup\I$ then also $\s u\in h$.
We consider paths that are similar to rim-hooks in the sense that they fulfil similar properties.

\smallskip
Next let $\pi$ be a reverse plane partition of shape $\lambda$ and $P$ a path in $\lambda$.
Define a map $\pi\pm P:\lambda\to\Z$ via
\begin{align*}
(\pi\pm P)(u)=
\begin{cases}
\pi(u)\pm1&\quad\text{if }u\in P,\\
\pi(u)&\quad\text{otherwise.}\\
\end{cases}
\end{align*}
By definition $P$ is \emph{$\pi$-compatible} if the following two conditions are satisfied:
\begin{align}\label{eq:shh}
\text{If }u\in P\text{ and }u\in\I\cup\A&\text{ then }\e u\in P\text{ and }\pi(\e u)=\pi(u).\\ \label{eq:shp}
\text{If }u,\n u\in P&\text{ then }\pi(\n u)=\pi(u).
\end{align}
A rim-hook $h$ \emph{inserts} into $\pi$ if there exists a $\pi$-compatible path $P$ in $\lambda$ such that $\omega(P)=\omega(h)$ and $\ell(P)=\ell(h)$ and $\pi+P$ is again a reverse plane partition.
It is not difficult to show that this path $P$ is unique if it exists at all.
Thus if $h$ inserts into $\pi$ we may define $h*\pi=\pi+P$.

Alternatively construct a South-West-path $P(\pi,h)$ with $\omega(P(\pi,h))=\omega(h)$ as follows.
While $\ell(P(\pi,h))<\ell(h)$, if $u$ is the current cell then move to
\begin{align}\label{eq:P}
\text{the cell}\quad
\begin{cases}
\s u&\quad\text{if }u\in\B\cup\I\text{ and }\pi(\s u)=\pi(u),\\
\w u&\quad\text{otherwise.}
\end{cases}
\end{align}
Then $P(\pi,h)$ satisfies $\omega(P(\pi,h))=\omega(h)$, $\ell(P(\pi,h))=\ell(h)$ and \eqref{eq:shp} by definition.
One can show that $h$ inserts into $\pi$ if and only if $P(\pi,h)$ satisfies \eqref{eq:shh} and $\pi+P(\pi,h)$ is a reverse plane partition.

These definitions should be compared with the examples of the previous section.

\smallskip
We now formulate two theorems essentially stating that our insertion algorithm yields a bijection.
The proofs, which are omitted in this extended abstract, are slightly more involved than the proof of Hillman and Grassl.
Among other arguments they rely on a precise formulation of the familiar fact that certain paths in $\lambda$ cannot cross.
The first theorem gives a sufficient criterion for when a rim-hook inserts into a reverse plane partition.

\begin{thm}\label{Theorem:lexinsert} Let $\lambda$ be a partition and $h_1,h_2,\dots,h_s$ a sequence of rim-hooks of $\lambda$ such that $h_i\leq h_{i+1}$ for all $i\in[s-1]$.
Then $h_i$ inserts into $h_{i+1}*\dots*h_s*0$ for all $i\in[s]$, where $0$ denotes the zero reverse plane partition.
\end{thm}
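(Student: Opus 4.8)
The plan is to prove the statement by induction on $s$, reducing everything to a single key claim: if $f \leq h$ are rim-hooks of $\lambda$, $\pi$ is a reverse plane partition of shape $\lambda$, and $h$ inserts into $\pi$, then $f$ inserts into $h*\pi$. Granting this claim, the theorem follows immediately: for the sequence $h_1 \leq \dots \leq h_s$, set $\pi_s = 0$ and inductively $\pi_i = h_i * \pi_{i+1}$; each $h_i$ inserts into $\pi_{i+1}$ because $h_i \leq h_{i+1}$ and $h_{i+1}$ inserts into $\pi_{i+2}$ (with the base case $h_s$ inserting into the zero reverse plane partition being a direct, easy verification since $P(\pi,h)$ can only decrease already-zero entries and \eqref{eq:shh}, \eqref{eq:shp} are checked directly).

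The real work is the key claim. I would use the explicit South-West-path description: by the remark following \eqref{eq:P}, it suffices to show that the path $P(h*\pi, f)$ satisfies \eqref{eq:shh} and that $(h*\pi) + P(h*\pi,f)$ is again a reverse plane partition. Let $P = P(\pi, h)$ be the path used to build $h*\pi = \pi + P$, and let $Q = P(h*\pi, f)$. The strategy is to compare $Q$ with $P$ cell by cell. Both are South-West-paths; $P$ has tail $\omega(P) = \omega(h)$ and $Q$ has tail $\omega(Q) = \omega(f)$. The condition $f \leq h$ translates (via the order on rim-hooks recalled in the excerpt) into control on the contents of the heads and tails: $c(\alpha(f)) \geq c(\alpha(h))$, with a further comparison of the tails in case of equality. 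Geometrically this should force $Q$ to lie weakly "below/left" of $P$, i.e. the two paths do not cross. The main technical input the paper has flagged — "certain paths in $\lambda$ cannot cross" — is exactly what one needs here: once $Q$ and $P$ are prevented from crossing, the entries that $Q$ passes through in $h*\pi = \pi+P$ are, off the overlap, the same as in $\pi$, and on any common stretch the $+1$ from $P$ only helps satisfy the equalities $\pi(\s u) = \pi(u)$ demanded in \eqref{eq:P} and the compatibility conditions.

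Concretely, I would proceed in the following steps. First, establish the non-crossing lemma: if $f \leq h$ then $P(h*\pi,f)$ and $P(\pi,h)$ can meet but cannot properly cross, because at any cell $u$ where both are present the branching rule \eqref{eq:P} makes the same choice (using that $\pi(\s u) = \pi(u)$ iff $(\pi+P)(\s u) = (\pi+P)(u)$ when $u$ and $\s u$ are treated consistently by $P$, which is precisely condition \eqref{eq:shp} for $P$). Second, use non-crossing to show \eqref{eq:shh} holds for $Q$ in $h*\pi$: when $u \in Q$ and $u \in \I \cup \A$ we must check $\e u \in Q$ and $(h*\pi)(\e u) = (h*\pi)(u)$; since $h$ inserted, $P$ satisfies \eqref{eq:shh}, and the relative position of $Q$ and $P$ lets us transfer the equality. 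Third, verify $(h*\pi) + Q$ is a reverse plane partition by checking the local inequalities along $Q$; off $P$ this reduces to the corresponding statement for $\pi + Q'$ for a suitable path, while along $P$ the two increments are compatible by the region analysis ($\A,\B,\O,\I$) and the fact that $P$ already certified $\pi + P$ as a reverse plane partition.

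The hard part will be the non-crossing lemma and, more precisely, pinning down the exact sense in which $f \leq h$ prevents the paths from crossing — one has to rule out both the case where $Q$ would cross $P$ from below and the degenerate cases where the paths share a long segment and then separate in the "wrong" order, which is where the tie-breaking clauses in the definitions of $\leq$ on cells and the region decomposition are used. Handling the boundary conventions ($\pi(u) = 0$ outside to the north/west, $\pi(u)=\infty$ outside the diagram) carefully in these comparisons is the other place where one must be meticulous rather than clever. Everything else is bookkeeping with the explicit rule \eqref{eq:P} and the definitions \eqref{eq:shh}, \eqref{eq:shp}.
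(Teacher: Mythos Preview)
The paper is an extended abstract and does not actually prove this theorem; it only says that ``the proofs, which are omitted in this extended abstract, are slightly more involved than the proof of Hillman and Grassl'' and that ``among other arguments they rely on a precise formulation of the familiar fact that certain paths in $\lambda$ cannot cross.'' Your proposal is exactly in line with this hint: you reduce to a two-hook key claim and then invoke a non-crossing lemma for the paths $P(\pi,h)$ and $P(h*\pi,f)$, which is the mechanism the authors themselves flag. So as far as one can tell from the paper, your overall architecture matches theirs.

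Two remarks on the details of your sketch. First, the reduction to the key claim ``$f\le h$ and $h$ inserts into $\pi$ $\Rightarrow$ $f$ inserts into $h*\pi$'' is clean and the downward induction you describe is correct. Second, your parenthetical justification for the base case (``$P(\pi,h)$ can only decrease already-zero entries'') is garbled: in the base case $\pi=0$ you are \emph{adding} along the path, not decreasing. What you actually need to check is that for $\pi=0$ the rule \eqref{eq:P} traces out a path whose $+1$'s give an RPP and which satisfies \eqref{eq:shh}; this is indeed routine using the region decomposition $\A,\B,\I,\O$ (in fact the path is the rim-hook itself), but you should say so rather than the sentence you wrote.

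The genuinely hard step, as you correctly identify, is making the non-crossing argument precise: showing that $f\le h$ forces $Q=P(h*\pi,f)$ to stay on the correct side of $P=P(\pi,h)$, handling the shared-segment cases via \eqref{eq:shp}, and then using this to transfer \eqref{eq:shh} and the RPP property. Your sketch of why the branching rule \eqref{eq:P} makes the same choice at any common cell is the right idea, and your caution about boundary conventions and tie-breaking in the order is well placed. None of this is carried out in the paper either, so there is nothing further to compare against.
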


Let $h_1,h_2,\dots,h_s$ be rim-hooks such that $h_i\leq h_{i+1}$ for all $i\in[s-1]$, and set $\pi=h_1*h_2*\dots*h_s*0$.
The sequence $h_1,h_2,\dots,h_s$ is called \emph{lexicographic factorisation} of $\pi$.
By Theorem~\ref{Theorem:lexinsert} there is a well-defined map $\Psi$ assigning to each multi-set of rim-hooks of $\lambda$ a reverse plane partition of shape $\lambda$ by inserting the rim-hooks into the zero reverse plane partition in lexicographic order.
The next result states that this map is a bijection, that is, each reverse plane partition has a unique lexicographic factorisation.

\begin{thm}\label{Theorem:lexfact} Let $\lambda$ be a partition and $\pi$ a reverse plane partition of shape $\lambda$.
Then there exists a unique sequence $h_1,h_2,\dots,h_s$ of rim-hooks of $\lambda$ such that $h_i\leq h_{i+1}$ for all $i\in[s-1]$ 
and
\begin{align*}
h_1*h_2*\dots*h_s*0=\pi.
\end{align*}
\end{thm}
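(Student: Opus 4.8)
The plan is to establish both existence and uniqueness of the lexicographic factorisation by induction on $\abs{\pi}$, the base case $\abs{\pi}=0$ being trivial since the empty sequence is the only factorisation of the zero reverse plane partition. For the inductive step, the key is to extract the \emph{lexicographically smallest} rim-hook $h_1$ from $\pi$ in a canonical way, producing a reverse plane partition $\pi'$ with $\abs{\pi'}=\abs{\pi}-\abs{h_1}$, and then to invoke the inductive hypothesis on $\pi'$. This requires two things: first, that the extraction is well-defined and that the extracted $h_1$ is forced (giving uniqueness at the top); second, that when we prepend $h_1$ to the lexicographic factorisation $h_2\leq\dots\leq h_s$ of $\pi'$, the resulting sequence is still weakly increasing, i.e.\ $h_1\leq h_2$.

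The first step is to define the extraction. Given $\pi\neq 0$, consider the South-West-path description from \eqref{eq:P} run in reverse, or more directly: identify the cell $u\in\lambda$ of smallest content among cells where $\pi$ ``can be decreased,'' tracing out a path $P$ on which $\pi$ is constant along the relevant edges, so that $\pi-P$ is again a reverse plane partition and $P$ corresponds to a rim-hook $h_1$ under the bijection between cells and rim-hooks. The content-order bookkeeping built into the definitions of $\A$, $\B$, $\I$, $\O$ and the two total orders on $\lambda$ is precisely what guarantees that this path is uniquely determined and that $h_1$ is the minimum rim-hook appearing in \emph{any} factorisation of $\pi$: if $h_1*h_2*\dots*h_s*0=\pi$ with $h_1\leq\dots\leq h_s$, then peeling off $h_1$ must reverse the last insertion, and the $\pi$-compatible path realising the insertion of $h_1$ into $h_2*\dots*h_s*0$ must coincide with $P$ by the uniqueness of $\pi$-compatible paths already noted in the text. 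This pins down $h_1$ and hence $\pi'=h_1^{-1}*\pi:=\pi-P$, reducing the claim to $\pi'$.

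The inductive hypothesis gives a unique weakly increasing sequence $h_2\leq h_3\leq\dots\leq h_s$ of rim-hooks with $h_2*\dots*h_s*0=\pi'$. By Theorem~\ref{Theorem:lexinsert}, $h_1$ then inserts into $\pi'=h_2*\dots*h_s*0$ provided $h_1\leq h_2$, and when it does, $h_1*\pi'=\pi$ follows because insertion is the inverse of the extraction we performed (one checks the $\pi'$-compatible path for inserting $h_1$ is $P$ again, using conditions \eqref{eq:shh} and \eqref{eq:shp}). So everything comes down to the inequality $h_1\leq h_2$, together with the compatibility statement that extracting the minimum and then re-inserting it returns $\pi$.

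The main obstacle will be exactly this comparison $h_1\leq h_2$: one must show that the canonically extracted minimal rim-hook of $\pi$ is no larger, in the reverse lexicographic order on rim-hooks (comparing $c(\alpha(\cdot))$ first, then $c(\omega(\cdot))$), than the minimal rim-hook of $\pi'$. The natural approach is a ``paths do not cross'' argument — the text explicitly flags this as the technical heart — comparing the extraction path $P$ for $\pi$ with the extraction path $P'$ for $\pi'$: since $\pi$ and $\pi'$ differ only along $P$, and both paths are governed by the same deterministic rule \eqref{eq:P} with values that can only have decreased, $P'$ cannot ``start further East'' than $P$ in content; a careful case analysis on how $P'$ interacts with the cells of $P$ in each of the regions $\A$, $\B$, $\I$, $\O$ yields $c(\alpha(h_1))\leq c(\alpha(h_2))$, and in the tie case a secondary comparison of tails gives $c(\omega(h_1))\leq c(\omega(h_2))$. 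Once this monotonicity of extraction is in hand, the induction closes and both existence and uniqueness follow simultaneously.
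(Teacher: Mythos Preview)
Your plan matches the paper's own route: the paper (an extended abstract that omits full proofs) sets up exactly this induction via Theorem~\ref{Theorem:inverse}, extracting the first factor as $h(\pi,u)$ for the content-minimal candidate $u\in\C(\pi)$ along the North-East-path $Q(\pi,u)$, and it flags the same ``paths cannot cross'' lemma as the technical core. One point to sharpen: your phrase ``the South-West-path description from \eqref{eq:P} run in reverse'' is precisely what the paper warns is \emph{flawed} (see the paragraph before the definition of $\C(\pi)$); the extraction must be a North-East-path built from a candidate head, not a reversed insertion path from the tail, so make sure your ``or more directly'' alternative is the one you actually use.
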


\begin{align}
\label{eq:0123-122-1(lex)}
\raisebox{-4em}{
\begin{tikzpicture}[scale=.6,tdplot_main_coords]
\begin{scope}[xshift=0cm,yshift=-2cm]
\draw[line width=2pt,->](2.5,.5,1.5)--(2.5,.5,0);
\begin{scope}[xshift=6cm,yshift=0cm]
\draw[line width=2pt,->](1.5,1.5,1.5)--(1.5,1.5,0);
\end{scope}
\begin{scope}[xshift=12cm,yshift=0cm]
\draw[line width=2pt,->](2.5,.5,1.5)--(2.5,.5,0);
\end{scope}
\begin{scope}[xshift=18cm,yshift=0cm]
\draw[line width=2pt,->](2.5,.5,1.5)--(2.5,.5,0);
\end{scope}
\end{scope}
\begin{scope}[xshift=0cm,yshift=0cm]
\clip(3,0,0)--(3,1,0)--(3,2,0)--(2,2,0)--(2,3,0)--(2,4,0)--(1,4,0)--(0,4,0)--(0,4,1)--(0,3,1)--(1,3,1)--(1,2,1)--(1,1,1)--(2,1,1)--(2,0,1)--(3,0,1)--(3,0,0)--cycle;
\draw[line width=1pt,fill=black!5]
(2,1,1)--(3,1,1)--(3,2,1)--(2,2,1)--cycle
(2,0,1)--(3,0,1)--(3,1,1)--(2,1,1)--cycle
(1,3,1)--(2,3,1)--(2,4,1)--(1,4,1)--cycle
(1,2,1)--(2,2,1)--(2,3,1)--(1,3,1)--cycle
(1,1,1)--(2,1,1)--(2,2,1)--(1,2,1)--cycle
(0,3,1)--(1,3,1)--(1,4,1)--(0,4,1)--cycle
;
\draw[line width=1pt,fill=black!20]
(3,1,0)--(3,2,0)--(3,2,1)--(3,1,1)--cycle
(3,0,0)--(3,1,0)--(3,1,1)--(3,0,1)--cycle
(2,3,0)--(2,4,0)--(2,4,1)--(2,3,1)--cycle
(2,2,0)--(2,3,0)--(2,3,1)--(2,2,1)--cycle
;
\draw[line width=1pt,fill=black!50]
(2,2,0)--(2,2,1)--(3,2,1)--(3,2,0)--cycle
(1,4,0)--(1,4,1)--(2,4,1)--(2,4,0)--cycle
(0,4,0)--(0,4,1)--(1,4,1)--(1,4,0)--cycle
;
\draw[line width=2pt](3,0,0)--(3,1,0)--(3,2,0)--(2,2,0)--(2,3,0)--(2,4,0)--(1,4,0)--(0,4,0)--(0,4,1)--(0,3,1)--(1,3,1)--(1,2,1)--(1,1,1)--(2,1,1)--(2,0,1)--(3,0,1)--(3,0,0)--cycle;
\end{scope}
\begin{scope}[xshift=6cm,yshift=0cm]
\clip(2,1,0)--(2,2,0)--(2,3,0)--(1,3,0)--(1,3,1)--(1,2,1)--(1,1,1)--(2,1,1)--cycle;
\draw[line width=1pt,fill=black!5]
(1,2,1)--(2,2,1)--(2,3,1)--(1,3,1)--cycle
(1,1,1)--(2,1,1)--(2,2,1)--(1,2,1)--cycle
;
\draw[line width=1pt,fill=black!20]
(2,2,0)--(2,3,0)--(2,3,1)--(2,2,1)--cycle
(2,1,0)--(2,2,0)--(2,2,1)--(2,1,1)--cycle
;
\draw[line width=1pt,fill=black!50]
(1,3,0)--(1,3,1)--(2,3,1)--(2,3,0)--cycle
;
\draw[line width=2pt](2,1,0)--(2,2,0)--(2,3,0)--(1,3,0)--(1,3,1)--(1,2,1)--(1,1,1)--(2,1,1)--cycle;
\end{scope}
\begin{scope}[xshift=12cm,yshift=0cm]
\clip(3,0,0)--(3,1,0)--(3,2,0)--(2,2,0)--(1,2,0)--(1,2,1)--(1,1,1)--(2,1,1)--(2,0,1)--(3,0,1)--(3,0,0)--cycle;
\draw[line width=1pt,fill=green]
(1,1,1)--(2,1,1)--(2,2,1)--(1,2,1)--cycle
;
\draw[line width=1pt,fill=yellow]
(2,1,1)--(3,1,1)--(3,2,1)--(2,2,1)--cycle
(2,0,1)--(3,0,1)--(3,1,1)--(2,1,1)--cycle
;
\draw[line width=1pt,fill=black!20]
(3,1,0)--(3,2,0)--(3,2,1)--(3,1,1)--cycle
(3,0,0)--(3,1,0)--(3,1,1)--(3,0,1)--cycle
;
\draw[line width=1pt,fill=black!50]
(2,2,0)--(2,2,1)--(3,2,1)--(3,2,0)--cycle
(1,2,0)--(1,2,1)--(2,2,1)--(2,2,0)--cycle
;
\draw[line width=2pt](3,0,0)--(3,1,0)--(3,2,0)--(2,2,0)--(1,2,0)--(1,2,1)--(1,1,1)--(2,1,1)--(2,0,1)--(3,0,1)--(3,0,0)--cycle;
\end{scope}
\begin{scope}[xshift=18cm,yshift=0cm]
\clip(3,0,0)--(3,1,0)--(2,1,0)--(2,1,1)--(2,0,1)--(3,0,1)--cycle;
\draw[line width=1pt,fill=black!5]
(2,0,1)--(3,0,1)--(3,1,1)--(2,1,1)--cycle
;
\draw[line width=1pt,fill=black!20]
(3,0,0)--(3,1,0)--(3,1,1)--(3,0,1)--cycle
;
\draw[line width=1pt,fill=black!50]
(2,1,0)--(2,1,1)--(3,1,1)--(3,1,0)--cycle
;
\draw[line width=2pt](3,0,0)--(3,1,0)--(2,1,0)--(2,1,1)--(2,0,1)--(3,0,1)--cycle;
\end{scope}
\begin{scope}[xshift=0cm,yshift=-44mm]
\begin{scope}[xshift=0cm,yshift=0cm]
\clip(3,0,0)--(3,4,0)--(0,4,0)--(0,3,0)--(1,3,0)--(1,2,0)--(1,1,0)--(2,1,0)--(2,0,0)--(3,0,0)--cycle;
\draw[line width=1pt,fill=black!5]
(2,3,0)--(3,3,0)--(3,4,0)--(2,4,0)--cycle
(2,2,0)--(3,2,0)--(3,3,0)--(2,3,0)--cycle
(2,1,0)--(3,1,0)--(3,2,0)--(2,2,0)--cycle
(2,0,0)--(3,0,0)--(3,1,0)--(2,1,0)--cycle
(1,3,0)--(2,3,0)--(2,4,0)--(1,4,0)--cycle
(1,2,0)--(2,2,0)--(2,3,0)--(1,3,0)--cycle
(1,1,0)--(2,1,0)--(2,2,0)--(1,2,0)--cycle
(0,3,0)--(1,3,0)--(1,4,0)--(0,4,0)--cycle
;
\draw[line width=1pt,fill=black!20]
;
\draw[line width=1pt,fill=black!50]
;
\draw[line width=2pt](3,0,0)--(3,4,0)--(0,4,0)--(0,3,0)--(1,3,0)--(1,2,0)--(1,1,0)--(2,1,0)--(2,0,0)--(3,0,0)--cycle;
\end{scope}
\begin{scope}[xshift=6cm,yshift=0cm]
\clip(3,0,0)--(3,4,0)--(0,4,0)--(0,4,1)--(0,3,1)--(1,3,1)--(1,2,1)--(1,1,1)--(2,1,1)--(2,0,1)--(3,0,1)--(3,0,0)--cycle;
\draw[line width=1pt,fill=black!5]
(2,3,0)--(3,3,0)--(3,4,0)--(2,4,0)--cycle
(2,2,0)--(3,2,0)--(3,3,0)--(2,3,0)--cycle
(2,1,1)--(3,1,1)--(3,2,1)--(2,2,1)--cycle
(2,0,1)--(3,0,1)--(3,1,1)--(2,1,1)--cycle
(1,3,1)--(2,3,1)--(2,4,1)--(1,4,1)--cycle
(1,2,1)--(2,2,1)--(2,3,1)--(1,3,1)--cycle
(1,1,1)--(2,1,1)--(2,2,1)--(1,2,1)--cycle
(0,3,1)--(1,3,1)--(1,4,1)--(0,4,1)--cycle
;
\draw[line width=1pt,fill=black!20]
(3,1,0)--(3,2,0)--(3,2,1)--(3,1,1)--cycle
(3,0,0)--(3,1,0)--(3,1,1)--(3,0,1)--cycle
(2,3,0)--(2,4,0)--(2,4,1)--(2,3,1)--cycle
(2,2,0)--(2,3,0)--(2,3,1)--(2,2,1)--cycle
;
\draw[line width=1pt,fill=black!50]
(2,2,0)--(2,2,1)--(3,2,1)--(3,2,0)--cycle
(1,4,0)--(1,4,1)--(2,4,1)--(2,4,0)--cycle
(0,4,0)--(0,4,1)--(1,4,1)--(1,4,0)--cycle
;
\draw[line width=2pt](3,0,0)--(3,4,0)--(0,4,0)--(0,4,1)--(0,3,1)--(1,3,1)--(1,2,1)--(1,1,1)--(2,1,1)--(2,0,1)--(3,0,1)--(3,0,0)--cycle;
\end{scope}
\begin{scope}[xshift=12cm,yshift=0cm]
\clip(3,0,0)--(3,4,0)--(0,4,0)--(0,4,1)--(0,3,1)--(1,3,1)--(1,3,2)--(1,2,2)--(1,1,2)--(2,1,2)--(2,1,1)--(2,0,1)--(3,0,1)--(3,0,0)--cycle;
\draw[line width=1pt,fill=black!5]
(2,3,0)--(3,3,0)--(3,4,0)--(2,4,0)--cycle
(2,2,0)--(3,2,0)--(3,3,0)--(2,3,0)--cycle
(2,1,1)--(3,1,1)--(3,2,1)--(2,2,1)--cycle
(2,0,1)--(3,0,1)--(3,1,1)--(2,1,1)--cycle
(1,3,1)--(2,3,1)--(2,4,1)--(1,4,1)--cycle
(1,2,2)--(2,2,2)--(2,3,2)--(1,3,2)--cycle
(1,1,2)--(2,1,2)--(2,2,2)--(1,2,2)--cycle
(0,3,1)--(1,3,1)--(1,4,1)--(0,4,1)--cycle
;
\draw[line width=1pt,fill=black!20]
(3,1,0)--(3,2,0)--(3,2,1)--(3,1,1)--cycle
(3,0,0)--(3,1,0)--(3,1,1)--(3,0,1)--cycle
(2,3,0)--(2,4,0)--(2,4,1)--(2,3,1)--cycle
(2,2,1)--(2,3,1)--(2,3,2)--(2,2,2)--cycle
(2,2,0)--(2,3,0)--(2,3,1)--(2,2,1)--cycle
(2,1,1)--(2,2,1)--(2,2,2)--(2,1,2)--cycle
;
\draw[line width=1pt,fill=black!50]
(2,2,0)--(2,2,1)--(3,2,1)--(3,2,0)--cycle
(1,4,0)--(1,4,1)--(2,4,1)--(2,4,0)--cycle
(1,3,1)--(1,3,2)--(2,3,2)--(2,3,1)--cycle
(0,4,0)--(0,4,1)--(1,4,1)--(1,4,0)--cycle
;
\draw[line width=2pt](3,0,0)--(3,4,0)--(0,4,0)--(0,4,1)--(0,3,1)--(1,3,1)--(1,3,2)--(1,2,2)--(1,1,2)--(2,1,2)--(2,1,1)--(2,0,1)--(3,0,1)--(3,0,0)--cycle;
\end{scope}
\begin{scope}[xshift=18cm,yshift=0cm]
\clip(3,0,0)--(3,4,0)--(0,4,0)--(0,4,1)--(0,3,1)--(1,3,1)--(1,3,2)--(1,2,2)--(1,1,2)--(2,1,2)--(2,0,2)--(3,0,2)--(3,0,0)--cycle;
\draw[line width=1pt,fill=black!5]
(2,3,0)--(3,3,0)--(3,4,0)--(2,4,0)--cycle
(1,3,1)--(2,3,1)--(2,4,1)--(1,4,1)--cycle
(1,2,2)--(2,2,2)--(2,3,2)--(1,3,2)--cycle
(1,1,2)--(2,1,2)--(2,2,2)--(1,2,2)--cycle
(0,3,1)--(1,3,1)--(1,4,1)--(0,4,1)--cycle
;
\draw[line width=1pt,fill=yellow]
(2,0,2)--(3,0,2)--(3,1,2)--(2,1,2)--cycle
(2,1,2)--(3,1,2)--(3,2,2)--(2,2,2)--cycle
;
\draw[line width=1pt,fill=green]
(2,2,1)--(3,2,1)--(3,3,1)--(2,3,1)--cycle
;
\draw[line width=1pt,fill=black!20]
(3,2,0)--(3,3,0)--(3,3,1)--(3,2,1)--cycle
(3,1,1)--(3,2,1)--(3,2,2)--(3,1,2)--cycle
(3,1,0)--(3,2,0)--(3,2,1)--(3,1,1)--cycle
(3,0,1)--(3,1,1)--(3,1,2)--(3,0,2)--cycle
(3,0,0)--(3,1,0)--(3,1,1)--(3,0,1)--cycle
(2,3,0)--(2,4,0)--(2,4,1)--(2,3,1)--cycle
(2,2,1)--(2,3,1)--(2,3,2)--(2,2,2)--cycle
;
\draw[line width=1pt,fill=black!50]
(2,3,0)--(2,3,1)--(3,3,1)--(3,3,0)--cycle
(2,2,1)--(2,2,2)--(3,2,2)--(3,2,1)--cycle
(1,4,0)--(1,4,1)--(2,4,1)--(2,4,0)--cycle
(1,3,1)--(1,3,2)--(2,3,2)--(2,3,1)--cycle
(0,4,0)--(0,4,1)--(1,4,1)--(1,4,0)--cycle
;
\draw[line width=2pt](3,0,0)--(3,4,0)--(0,4,0)--(0,4,1)--(0,3,1)--(1,3,1)--(1,3,2)--(1,2,2)--(1,1,2)--(2,1,2)--(2,0,2)--(3,0,2)--(3,0,0)--cycle;
\end{scope}
\end{scope}
\end{tikzpicture}
}
\end{align}

The examples \eqref{eq:0123-122-1(lex)} and \eqref{eq:0123-122-1(fac)} demonstrate that reverse plane partitions can in general be built in more than one way by successively inserting rim-hooks into the zero reverse plane partition.
However, \eqref{eq:0123-122-1(lex)} is the unique factorisation predicted by Theorem~\ref{Theorem:lexfact}, in which the building blocks are lexicographically increasing.

\begin{align}
\label{eq:0123-122-1(fac)}
\raisebox{-4em}{
\begin{tikzpicture}[scale=.6,tdplot_main_coords]
\begin{scope}[xshift=0cm,yshift=-2cm]
\draw[line width=2pt,->](1.5,1.5,1.5)--(1.5,1.5,0);
\begin{scope}[xshift=6cm,yshift=0cm]
\draw[line width=2pt,->](2.5,.5,1.5)--(2.5,.5,0);
\end{scope}
\begin{scope}[xshift=12cm,yshift=0cm]
\draw[line width=2pt,->](2.5,.5,1.5)--(2.5,.5,0);
\end{scope}
\begin{scope}[xshift=18cm,yshift=0cm]
\draw[line width=2pt,->](2.5,.5,1.5)--(2.5,.5,0);
\end{scope}
\end{scope}
\begin{scope}[xshift=0cm,yshift=0cm]
\clip(2,1,0)--(2,2,0)--(2,3,0)--(2,4,0)--(1,4,0)--(0,4,0)--(0,4,1)--(0,3,1)--(1,3,1)--(1,2,1)--(1,1,1)--(2,1,1)--cycle;
\draw[line width=1pt,fill=black!5]
(1,3,1)--(2,3,1)--(2,4,1)--(1,4,1)--cycle
(1,2,1)--(2,2,1)--(2,3,1)--(1,3,1)--cycle
(1,1,1)--(2,1,1)--(2,2,1)--(1,2,1)--cycle
(0,3,1)--(1,3,1)--(1,4,1)--(0,4,1)--cycle
;
\draw[line width=1pt,fill=black!20]
(2,3,0)--(2,4,0)--(2,4,1)--(2,3,1)--cycle
(2,2,0)--(2,3,0)--(2,3,1)--(2,2,1)--cycle
(2,1,0)--(2,2,0)--(2,2,1)--(2,1,1)--cycle
;
\draw[line width=1pt,fill=black!50]
(1,4,0)--(1,4,1)--(2,4,1)--(2,4,0)--cycle
(0,4,0)--(0,4,1)--(1,4,1)--(1,4,0)--cycle
;
\draw[line width=2pt](2,1,0)--(2,2,0)--(2,3,0)--(2,4,0)--(1,4,0)--(0,4,0)--(0,4,1)--(0,3,1)--(1,3,1)--(1,2,1)--(1,1,1)--(2,1,1)--cycle;
\end{scope}
\begin{scope}[xshift=6cm,yshift=0cm]
\clip(3,0,0)--(3,1,0)--(3,2,0)--(2,2,0)--(1,2,0)--(1,2,1)--(1,1,1)--(2,1,1)--(2,0,1)--(3,0,1)--(3,0,0)--cycle;
\draw[line width=1pt,fill=cyan]
(1,1,1)--(2,1,1)--(2,2,1)--(1,2,1)--cycle
;
\draw[line width=1pt,fill=red]
(2,1,1)--(3,1,1)--(3,2,1)--(2,2,1)--cycle
(2,0,1)--(3,0,1)--(3,1,1)--(2,1,1)--cycle
;
\draw[line width=1pt,fill=black!20]
(3,1,0)--(3,2,0)--(3,2,1)--(3,1,1)--cycle
(3,0,0)--(3,1,0)--(3,1,1)--(3,0,1)--cycle
;
\draw[line width=1pt,fill=black!50]
(2,2,0)--(2,2,1)--(3,2,1)--(3,2,0)--cycle
(1,2,0)--(1,2,1)--(2,2,1)--(2,2,0)--cycle
;
\draw[line width=2pt](3,0,0)--(3,1,0)--(3,2,0)--(2,2,0)--(1,2,0)--(1,2,1)--(1,1,1)--(2,1,1)--(2,0,1)--(3,0,1)--(3,0,0)--cycle;
\end{scope}
\begin{scope}[xshift=12cm,yshift=0cm]
\clip(3,0,0)--(3,1,0)--(3,2,0)--(2,2,0)--(2,3,0)--(1,3,0)--(1,3,1)--(1,2,1)--(1,1,1)--(2,1,1)--(2,0,1)--(3,0,1)--(3,0,0)--cycle;
\draw[line width=1pt,fill=black!5]
(2,1,1)--(3,1,1)--(3,2,1)--(2,2,1)--cycle
(2,0,1)--(3,0,1)--(3,1,1)--(2,1,1)--cycle
(1,2,1)--(2,2,1)--(2,3,1)--(1,3,1)--cycle
(1,1,1)--(2,1,1)--(2,2,1)--(1,2,1)--cycle
;
\draw[line width=1pt,fill=black!20]
(3,1,0)--(3,2,0)--(3,2,1)--(3,1,1)--cycle
(3,0,0)--(3,1,0)--(3,1,1)--(3,0,1)--cycle
(2,2,0)--(2,3,0)--(2,3,1)--(2,2,1)--cycle
;
\draw[line width=1pt,fill=black!50]
(2,2,0)--(2,2,1)--(3,2,1)--(3,2,0)--cycle
(1,3,0)--(1,3,1)--(2,3,1)--(2,3,0)--cycle
;
\draw[line width=2pt](3,0,0)--(3,1,0)--(3,2,0)--(2,2,0)--(2,3,0)--(1,3,0)--(1,3,1)--(1,2,1)--(1,1,1)--(2,1,1)--(2,0,1)--(3,0,1)--(3,0,0)--cycle;
\end{scope}
\begin{scope}[xshift=18cm,yshift=0cm]
\clip(3,0,0)--(3,1,0)--(2,1,0)--(2,1,1)--(2,0,1)--(3,0,1)--cycle;
\draw[line width=1pt,fill=black!5]
(2,0,1)--(3,0,1)--(3,1,1)--(2,1,1)--cycle
;
\draw[line width=1pt,fill=black!20]
(3,0,0)--(3,1,0)--(3,1,1)--(3,0,1)--cycle
;
\draw[line width=1pt,fill=black!50]
(2,1,0)--(2,1,1)--(3,1,1)--(3,1,0)--cycle
;
\draw[line width=2pt](3,0,0)--(3,1,0)--(2,1,0)--(2,1,1)--(2,0,1)--(3,0,1)--cycle;
\end{scope}
\begin{scope}[xshift=0cm,yshift=-44mm]
\begin{scope}[xshift=0cm,yshift=0cm]
\clip(3,0,0)--(3,4,0)--(0,4,0)--(0,3,0)--(1,3,0)--(1,2,0)--(1,1,0)--(2,1,0)--(2,0,0)--(3,0,0)--cycle;
\draw[line width=1pt,fill=black!5]
(2,3,0)--(3,3,0)--(3,4,0)--(2,4,0)--cycle
(2,2,0)--(3,2,0)--(3,3,0)--(2,3,0)--cycle
(2,1,0)--(3,1,0)--(3,2,0)--(2,2,0)--cycle
(2,0,0)--(3,0,0)--(3,1,0)--(2,1,0)--cycle
(1,3,0)--(2,3,0)--(2,4,0)--(1,4,0)--cycle
(1,2,0)--(2,2,0)--(2,3,0)--(1,3,0)--cycle
(1,1,0)--(2,1,0)--(2,2,0)--(1,2,0)--cycle
(0,3,0)--(1,3,0)--(1,4,0)--(0,4,0)--cycle
;
\draw[line width=1pt,fill=black!20]
;
\draw[line width=1pt,fill=black!50]
;
\draw[line width=2pt](3,0,0)--(3,4,0)--(0,4,0)--(0,3,0)--(1,3,0)--(1,2,0)--(1,1,0)--(2,1,0)--(2,0,0)--(3,0,0)--cycle;
\end{scope}
\begin{scope}[xshift=6cm,yshift=0cm]
\clip(3,0,0)--(3,4,0)--(0,4,0)--(0,4,1)--(0,3,1)--(1,3,1)--(1,2,1)--(1,1,1)--(2,1,1)--(2,1,0)--(2,0,0)--(3,0,0)--cycle;
\draw[line width=1pt,fill=black!5]
(2,3,0)--(3,3,0)--(3,4,0)--(2,4,0)--cycle
(2,2,0)--(3,2,0)--(3,3,0)--(2,3,0)--cycle
(2,1,0)--(3,1,0)--(3,2,0)--(2,2,0)--cycle
(2,0,0)--(3,0,0)--(3,1,0)--(2,1,0)--cycle
(1,3,1)--(2,3,1)--(2,4,1)--(1,4,1)--cycle
(1,2,1)--(2,2,1)--(2,3,1)--(1,3,1)--cycle
(1,1,1)--(2,1,1)--(2,2,1)--(1,2,1)--cycle
(0,3,1)--(1,3,1)--(1,4,1)--(0,4,1)--cycle
;
\draw[line width=1pt,fill=black!20]
(2,3,0)--(2,4,0)--(2,4,1)--(2,3,1)--cycle
(2,2,0)--(2,3,0)--(2,3,1)--(2,2,1)--cycle
(2,1,0)--(2,2,0)--(2,2,1)--(2,1,1)--cycle
;
\draw[line width=1pt,fill=black!50]
(1,4,0)--(1,4,1)--(2,4,1)--(2,4,0)--cycle
(0,4,0)--(0,4,1)--(1,4,1)--(1,4,0)--cycle
;
\draw[line width=2pt](3,0,0)--(3,4,0)--(0,4,0)--(0,4,1)--(0,3,1)--(1,3,1)--(1,2,1)--(1,1,1)--(2,1,1)--(2,1,0)--(2,0,0)--(3,0,0)--cycle;
\end{scope}
\begin{scope}[xshift=12cm,yshift=0cm]
\clip(3,0,0)--(3,4,0)--(0,4,0)--(0,4,1)--(0,3,1)--(1,3,1)--(1,2,1)--(1,1,1)--(2,1,1)--(2,0,1)--(3,0,1)--(3,0,0)--cycle;
\draw[line width=1pt,fill=black!5]
(2,3,0)--(3,3,0)--(3,4,0)--(2,4,0)--cycle
(1,3,1)--(2,3,1)--(2,4,1)--(1,4,1)--cycle
(1,2,1)--(2,2,1)--(2,3,1)--(1,3,1)--cycle
(1,1,1)--(2,1,1)--(2,2,1)--(1,2,1)--cycle
(0,3,1)--(1,3,1)--(1,4,1)--(0,4,1)--cycle
;
\draw[line width=1pt,fill=red]
(2,0,1)--(3,0,1)--(3,1,1)--(2,1,1)--cycle
(2,1,1)--(3,1,1)--(3,2,1)--(2,2,1)--cycle
;
\draw[line width=1pt,fill=cyan]
(2,2,1)--(3,2,1)--(3,3,1)--(2,3,1)--cycle
;
\draw[line width=1pt,fill=black!20]
(3,2,0)--(3,3,0)--(3,3,1)--(3,2,1)--cycle
(3,1,0)--(3,2,0)--(3,2,1)--(3,1,1)--cycle
(3,0,0)--(3,1,0)--(3,1,1)--(3,0,1)--cycle
(2,3,0)--(2,4,0)--(2,4,1)--(2,3,1)--cycle
;
\draw[line width=1pt,fill=black!50]
(2,3,0)--(2,3,1)--(3,3,1)--(3,3,0)--cycle
(1,4,0)--(1,4,1)--(2,4,1)--(2,4,0)--cycle
(0,4,0)--(0,4,1)--(1,4,1)--(1,4,0)--cycle
;
\draw[line width=2pt](3,0,0)--(3,4,0)--(0,4,0)--(0,4,1)--(0,3,1)--(1,3,1)--(1,2,1)--(1,1,1)--(2,1,1)--(2,0,1)--(3,0,1)--(3,0,0)--cycle;
\end{scope}
\begin{scope}[xshift=18cm,yshift=0cm]
\clip(3,0,0)--(3,4,0)--(0,4,0)--(0,4,1)--(0,3,1)--(1,3,1)--(1,3,2)--(1,2,2)--(1,1,2)--(2,1,2)--(2,0,2)--(3,0,2)--(3,0,0)--cycle;
\draw[line width=1pt,fill=black!5]
(2,3,0)--(3,3,0)--(3,4,0)--(2,4,0)--cycle
(2,2,1)--(3,2,1)--(3,3,1)--(2,3,1)--cycle
(2,1,2)--(3,1,2)--(3,2,2)--(2,2,2)--cycle
(2,0,2)--(3,0,2)--(3,1,2)--(2,1,2)--cycle
(1,3,1)--(2,3,1)--(2,4,1)--(1,4,1)--cycle
(1,2,2)--(2,2,2)--(2,3,2)--(1,3,2)--cycle
(1,1,2)--(2,1,2)--(2,2,2)--(1,2,2)--cycle
(0,3,1)--(1,3,1)--(1,4,1)--(0,4,1)--cycle
;
\draw[line width=1pt,fill=black!20]
(3,2,0)--(3,3,0)--(3,3,1)--(3,2,1)--cycle
(3,1,1)--(3,2,1)--(3,2,2)--(3,1,2)--cycle
(3,1,0)--(3,2,0)--(3,2,1)--(3,1,1)--cycle
(3,0,1)--(3,1,1)--(3,1,2)--(3,0,2)--cycle
(3,0,0)--(3,1,0)--(3,1,1)--(3,0,1)--cycle
(2,3,0)--(2,4,0)--(2,4,1)--(2,3,1)--cycle
(2,2,1)--(2,3,1)--(2,3,2)--(2,2,2)--cycle
;
\draw[line width=1pt,fill=black!50]
(2,3,0)--(2,3,1)--(3,3,1)--(3,3,0)--cycle
(2,2,1)--(2,2,2)--(3,2,2)--(3,2,1)--cycle
(1,4,0)--(1,4,1)--(2,4,1)--(2,4,0)--cycle
(1,3,1)--(1,3,2)--(2,3,2)--(2,3,1)--cycle
(0,4,0)--(0,4,1)--(1,4,1)--(1,4,0)--cycle
;
\draw[line width=2pt](3,0,0)--(3,4,0)--(0,4,0)--(0,4,1)--(0,3,1)--(1,3,1)--(1,3,2)--(1,2,2)--(1,1,2)--(2,1,2)--(2,0,2)--(3,0,2)--(3,0,0)--cycle;
\end{scope}
\end{scope}
\end{tikzpicture}
}
\end{align}

Using the bijection $\Psi$ it is elementary to obtain the trace generating function. 

\begin{proof}[Proof of Theorem~\ref{Thm:trace}]
For each rim-hook $h$ the set of contents $\{c(u):u\in h\}$ is an interval $\{j-\lambda_j',\dots,\lambda_i-i\}$ where $(i,j)$ is the cell corresponding to $h$.
Moreover if $P$ is a North-East-path in $\lambda$ with $\omega(P)=\omega(h)$ and $\ell(P)=\ell(h)$ then $\{c(u):u\in P\}=\{c(u):u\in h\}$.
Hence if $h$ inserts into a reverse plane partition $\pi$ then
\begin{align*}
\prod_{k\in\Z}q_k^{\tr_k(h*\pi)}
&=\prod_{k\in\Z}q_k^{\tr_k(\pi)}\cdot\prod_{k=j-\lambda_j'}^{\lambda_i-i}q_k\,.
\qedhere
\end{align*}
\end{proof}

\section{The inverse map}\label{Section:factors}

This section treats the inverse of the map $\Psi$ defined in Section~\ref{Section:insert}.
That is, given a reverse plane partition how do we find its lexicographic factorisation?
An answer to this question is found in Theorem~\ref{Theorem:inverse}, which provides an inductive algorithm for constructing the lexicographic factorisation.

\smallskip
First consider a simpler problem.
A rim-hook $h$ is a \emph{factor} of a reverse plane partition $\pi$ if there exists a reverse plane partition $\tilde{\pi}$ such that $\pi=h*\tilde{\pi}$.
Note that the reverse plane partition $\tilde{\pi}$ is not unique in general, as is demonstrated in the example below.

\begin{align*}
\begin{tikzpicture}[scale=.6,tdplot_main_coords]
\begin{scope}[xshift=0cm,yshift=-2cm]
\draw[line width=2pt,->](.5,.5,1.5)--(.5,.5,0);
\end{scope}
\begin{scope}[xshift=0cm,yshift=0cm]
\clip(2,0,0)--(2,1,0)--(0,1,0)--(0,1,1)--(0,0,1)--(2,0,1)--cycle;
\draw[line width=1pt,fill=cyan]
(0,0,1)--(1,0,1)--(1,1,1)--(0,1,1)--cycle
(1,0,1)--(2,0,1)--(2,1,1)--(1,1,1)--cycle
;
\draw[line width=1pt,fill=black!20]
(2,0,1)--(2,0,0)--(2,1,0)--(2,1,1)--cycle
;
\draw[line width=1pt,fill=black!50]
(0,1,1)--(1,1,1)--(1,1,0)--(0,1,0)--cycle
(1,1,1)--(2,1,1)--(2,1,0)--(1,1,0)--cycle
;
\draw[line width=2pt](2,0,0)--(2,1,0)--(0,1,0)--(0,1,1)--(0,0,1)--(2,0,1)--cycle;
\end{scope}
\begin{scope}[xshift=0cm,yshift=-44mm]
\clip(2,0,0)--(2,2,0)--(0,2,0)--(0,2,1)--(0,1,1)--(0,1,1)--(0,0,1)--(2,0,1)--cycle;
\draw[line width=1pt,fill=black!5]
(0,0,1)--(1,0,1)--(1,1,1)--(0,1,1)--cycle
(0,1,1)--(1,1,1)--(1,2,1)--(0,2,1)--cycle
(1,0,1)--(2,0,1)--(2,1,1)--(1,1,1)--cycle
(1,1,1)--(2,1,1)--(2,2,1)--(1,2,1)--cycle
;
\draw[line width=1pt,fill=black!20]
(2,0,1)--(2,0,0)--(2,1,0)--(2,1,1)--cycle
(2,1,1)--(2,1,0)--(2,2,0)--(2,2,1)--cycle
;
\draw[line width=1pt,fill=black!50]
(0,2,1)--(1,2,1)--(1,2,0)--(0,2,0)--cycle
(1,2,1)--(2,2,1)--(2,2,0)--(1,2,0)--cycle
;
\draw[line width=2pt](2,0,0)--(2,2,0)--(0,2,0)--(0,2,1)--(0,1,1)--(0,1,1)--(0,0,1)--(2,0,1)--cycle;
\end{scope}
\begin{scope}[xshift=4cm,yshift=-44mm]
\clip(2,0,0)--(2,2,0)--(0,2,0)--(0,2,1)--(0,1,1)--(0,1,2)--(0,0,2)--(2,0,2)--cycle;
\draw[line width=1pt,fill=black!5]
(0,1,1)--(1,1,1)--(1,2,1)--(0,2,1)--cycle
(1,1,1)--(2,1,1)--(2,2,1)--(1,2,1)--cycle
;
\draw[line width=1pt,fill=cyan]
(0,0,2)--(1,0,2)--(1,1,2)--(0,1,2)--cycle
(1,0,2)--(2,0,2)--(2,1,2)--(1,1,2)--cycle
;
\draw[line width=1pt,fill=black!20]
(2,0,2)--(2,0,1)--(2,1,1)--(2,1,2)--cycle
(2,0,1)--(2,0,0)--(2,1,0)--(2,1,1)--cycle
(2,1,1)--(2,1,0)--(2,2,0)--(2,2,1)--cycle
;
\draw[line width=1pt,fill=black!50]
(0,1,2)--(1,1,2)--(1,1,1)--(0,1,1)--cycle
(0,2,1)--(1,2,1)--(1,2,0)--(0,2,0)--cycle
(1,1,2)--(2,1,2)--(2,1,1)--(1,1,1)--cycle
(1,2,1)--(2,2,1)--(2,2,0)--(1,2,0)--cycle
;
\draw[line width=2pt](2,0,0)--(2,2,0)--(0,2,0)--(0,2,1)--(0,1,1)--(0,1,2)--(0,0,2)--(2,0,2)--cycle;
\end{scope}
\begin{scope}[xshift=15cm]
\begin{scope}[xshift=0cm,yshift=-2cm]
\draw[line width=2pt,->](.5,.5,1.5)--(.5,.5,0);
\end{scope}
\begin{scope}[xshift=0cm,yshift=0cm]
\clip(2,0,0)--(2,1,0)--(0,1,0)--(0,1,1)--(0,0,1)--(2,0,1)--cycle;
\draw[line width=1pt,fill=red]
(0,0,1)--(1,0,1)--(1,1,1)--(0,1,1)--cycle
;
\draw[line width=1pt,fill=yellow]
(1,0,1)--(2,0,1)--(2,1,1)--(1,1,1)--cycle
;
\draw[line width=1pt,fill=black!20]
(2,0,1)--(2,0,0)--(2,1,0)--(2,1,1)--cycle
;
\draw[line width=1pt,fill=black!50]
(0,1,1)--(1,1,1)--(1,1,0)--(0,1,0)--cycle
(1,1,1)--(2,1,1)--(2,1,0)--(1,1,0)--cycle
;
\draw[line width=2pt](2,0,0)--(2,1,0)--(0,1,0)--(0,1,1)--(0,0,1)--(2,0,1)--cycle;
\end{scope}
\begin{scope}[xshift=0cm,yshift=-44mm]
\clip(2,0,0)--(2,2,0)--(0,2,0)--(0,2,1)--(0,1,1)--(0,1,2)--(0,0,2)--(1,0,2)--(1,0,1)--(2,0,1)--cycle;
\draw[line width=1pt,fill=black!5]
(0,0,2)--(1,0,2)--(1,1,2)--(0,1,2)--cycle
(0,1,1)--(1,1,1)--(1,2,1)--(0,2,1)--cycle
(1,0,1)--(2,0,1)--(2,1,1)--(1,1,1)--cycle
(1,1,0)--(2,1,0)--(2,2,0)--(1,2,0)--cycle
;
\draw[line width=1pt,fill=black!20]
(1,0,2)--(1,0,1)--(1,1,1)--(1,1,2)--cycle
(1,1,1)--(1,1,0)--(1,2,0)--(1,2,1)--cycle
(2,0,1)--(2,0,0)--(2,1,0)--(2,1,1)--cycle
;
\draw[line width=1pt,fill=black!50]
(0,1,2)--(1,1,2)--(1,1,1)--(0,1,1)--cycle
(0,2,1)--(1,2,1)--(1,2,0)--(0,2,0)--cycle
(1,1,1)--(2,1,1)--(2,1,0)--(1,1,0)--cycle
;
\draw[line width=2pt](2,0,0)--(2,2,0)--(0,2,0)--(0,2,1)--(0,1,1)--(0,1,2)--(0,0,2)--(1,0,2)--(1,0,1)--(2,0,1)--cycle;
\end{scope}
\begin{scope}[xshift=4cm,yshift=-44mm]
\clip(2,0,0)--(2,2,0)--(0,2,0)--(0,2,1)--(0,1,1)--(0,1,2)--(0,0,2)--(2,0,2)--cycle;
\draw[line width=1pt,fill=black!5]
(0,0,2)--(1,0,2)--(1,1,2)--(0,1,2)--cycle
(0,1,1)--(1,1,1)--(1,2,1)--(0,2,1)--cycle
;
\draw[line width=1pt,fill=red]
(1,1,1)--(2,1,1)--(2,2,1)--(1,2,1)--cycle
;
\draw[line width=1pt,fill=yellow]
(1,0,2)--(2,0,2)--(2,1,2)--(1,1,2)--cycle
;
\draw[line width=1pt,fill=black!20]
(2,0,2)--(2,0,1)--(2,1,1)--(2,1,2)--cycle
(2,0,1)--(2,0,0)--(2,1,0)--(2,1,1)--cycle
(2,1,1)--(2,1,0)--(2,2,0)--(2,2,1)--cycle
;
\draw[line width=1pt,fill=black!50]
(0,1,2)--(1,1,2)--(1,1,1)--(0,1,1)--cycle
(0,2,1)--(1,2,1)--(1,2,0)--(0,2,0)--cycle
(1,1,2)--(2,1,2)--(2,1,1)--(1,1,1)--cycle
(1,2,1)--(2,2,1)--(2,2,0)--(1,2,0)--cycle
;
\draw[line width=2pt](2,0,0)--(2,2,0)--(0,2,0)--(0,2,1)--(0,1,1)--(0,1,2)--(0,0,2)--(2,0,2)--cycle;
\end{scope}
\end{scope}
\end{tikzpicture}
\end{align*}

In this sense the idea of finding a South-West-path $P$ starting at $\omega(h)$ similar to \eqref{eq:P} and reducing each entry of $\pi$ along $P$ is flawed.
Instead we shall work with North-East-paths.
Indeed it turns that $\tilde{\pi}$ is unique if one fixes $\alpha(P(\tilde{\pi},h))$.
The natural question arises, which cells $u\in\lambda$ are possible candidates for $u=\alpha(P(\tilde{\pi},h))$.
This leads to the following definition.

The set of \emph{candidates} of $\pi$ is defined as
\begin{align*}
\C(\pi)
=\big\{u\in\O:\pi(u)>\pi(\w u)\big\}
\cup\big\{u\in\A:\pi(u)>\pi(\w u)\text{ and }\pi(u)>\pi(\n u)\big\}.
\end{align*}
The example below shows the candidates of our favourite reverse plane partition.
For example, the cell $u=(3,1)$ is a candidate since $u\in\O$ and $\pi(u)>\pi(\w u)$, where $\pi(\w u)=0$ by convention.
The cell $u=(1,3)$ is not a candidate even though $\pi(u)>\pi(\w u)$ and $\pi(u)>\pi(\n u)$ because $u\notin\A\cup\O$.
\begin{align*}
\begin{tikzpicture}[scale=.6]
\begin{scope}[xshift=0cm]
\fill[yellow]
(3,2)--(4,2)--(4,3)--(3,3)
(1,2)--(2,2)--(2,3)--(1,3)
(1,1)--(2,1)--(2,2)--(1,2)
(0,0)--(1,0)--(1,1)--(0,1);
\draw
(0,1)--(1,1)
(0,2)--(3,2)
(1,1)--(1,3)
(2,1)--(2,3)
(3,2)--(3,3)
;
\draw[line width=1pt](0,0)--(1,0)--(1,1)--(3,1)--(3,2)--(4,2)--(4,3)--(0,3)--cycle;
\draw[xshift=5mm,yshift=5mm]
(0,2)node{{$0$}}
(1,2)node{{$1$}}
(2,2)node{{$2$}}
(3,2)node{{$3$}}
(0,1)node{{$1$}}
(1,1)node{{$2$}}
(2,1)node{{$2$}}
(0,0)node{{$1$}};
\end{scope}
\end{tikzpicture}
\end{align*}

Given a candidate cell $u\in\C(\pi)$ define a North-East-path $Q(\pi,u)$ in $\lambda$ as follows.
Set $\alpha(Q(\pi,u))=u$.
If $v$ is the current cell then move to
\begin{align*}
\text{the cell}\quad
\begin{cases}
\n v&\quad\text{if }v\in\O\cup\B\text{ and }\pi(\n v)=\pi(v),\\
\e v&\quad\text{if }v\in\I\cup\A\text{ or }\pi(\n v)<\pi(v)\text{ and }\e v\in\lambda,
\end{cases}
\end{align*}
and terminate the path if $\pi(\n v)<\pi(v)$ but $\e v\notin\lambda$.
Moreover, define a rim-hook $h(\pi,u)$ by demanding $\omega(h(\pi,u))=\omega(Q(\pi,u))$ and $\ell(h(\pi,u))=\ell(Q(\pi,u))$.

One can prove that $h$ is a factor of $\pi$ if and only if $h=h(\pi,u)$ for some candidate $u\in\C(\pi)$ such that $Q(\pi,u)$ is $\pi$-compatible and $\pi-Q(\pi,u)$ is a reverse plane partition.
To obtain a lexicographic factorisation of $\pi$ it therefore suffices to determine the correct candidate.
The correct candidate turns out to be the minimal candidate with respect to the content order.

\begin{thm}\label{Theorem:inverse} Let $\lambda$ be a partition, $\pi$ a reverse plane partition of shape $\lambda$ and let $u$ be the minimum of $\C(\pi)$ with respect to the content order.
Then a lexicographic factorisation is given by $h(\pi,u),h_2,\dots,h_s$, where $h_2,\dots,h_s$ is a lexicographic factorisation of $\pi-Q(\pi,u)$.
\end{thm}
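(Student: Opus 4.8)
The plan is to argue by induction on $\abs{\pi}=\sum_{v\in\lambda}\pi(v)$. If $\abs{\pi}=0$ then $\pi$ is the zero reverse plane partition, $\C(\pi)=\emptyset$, and the empty sequence is its (unique) lexicographic factorisation, so there is nothing to show. Assume $\abs{\pi}>0$. First one notes that $\C(\pi)\neq\emptyset$, so that the minimum $u$ of $\C(\pi)$ in the content order exists: by Theorem~\ref{Theorem:lexfact} the reverse plane partition $\pi$ has a nonempty lexicographic factorisation $g_1\leq\dots\leq g_t$, by Theorem~\ref{Theorem:lexinsert} the rim-hook $g_1$ inserts into $g_2*\dots*g_t*0$ and is hence a factor of $\pi$, and by the characterisation of factors recalled before the theorem there is a candidate $v\in\C(\pi)$ with $g_1=h(\pi,v)$.

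The heart of the matter is the following claim, which we call Lemma~A: \emph{for the content-minimal candidate $u=\min_{\trianglelefteq}\C(\pi)$ the path $Q(\pi,u)$ is $\pi$-compatible and $\pi-Q(\pi,u)$ is again a reverse plane partition.} Granting it, the characterisation of factors yields that $h:=h(\pi,u)$ is a factor of $\pi$ with $\pi=h*\tilde\pi$, where $\tilde\pi:=\pi-Q(\pi,u)$. To prove Lemma~A one rules out the ways the construction can go wrong. Since a North step of $Q(\pi,u)$ is taken only above cells of equal $\pi$-value, condition \eqref{eq:shp} and the column-monotonicity of $\pi-Q(\pi,u)$ along vertical parts of the path hold automatically, the entries immediately East and South of $Q(\pi,u)$ are only helped by subtracting $1$, and cells of $\I\cup\A$ are never the last cell of their row, so the walk never gets stuck. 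The remaining obstructions are therefore: the path meets a cell $v\in\I\cup\A$ with $\pi(\e v)>\pi(v)$, which violates \eqref{eq:shh}; the path meets a cell $v\in\I\cup\A$ with $\pi(\n v)=\pi(v)$; or, just after a North step, it meets a cell $v$ with $\pi(\w v)=\pi(v)$; the latter two would make $\pi-Q(\pi,u)$ fail to be weakly increasing. In each case one walks $Q(\pi,u)$ back towards its head, uses that consecutive cells of the path differ in content by exactly one and that the path is squeezed between the level sets of $\pi$ (the ``paths cannot cross'' principle), and produces a candidate of $\pi$ that is strictly below $u$ in the content order, contradicting the choice of $u$. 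This is the delicate step.

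Since $\tilde\pi$ is a reverse plane partition with $\abs{\tilde\pi}=\abs{\pi}-\ell(h)-1<\abs{\pi}$, the induction hypothesis applies to $\tilde\pi$: it has a lexicographic factorisation $h_2,\dots,h_s$, and if $\tilde\pi\neq 0$ its first rim-hook is $h_2=h(\tilde\pi,u')$ with $u'=\min_{\trianglelefteq}\C(\tilde\pi)$. It remains to prove $h\leq h_2$. Because the contents of the cells of a rim-hook form an interval, and likewise for a North-East-path, one has $c(\alpha(h))=c(u)$ and $c(\alpha(h_2))=c(u')$; by the order on rim-hooks the inequality $h\leq h_2$ therefore means $c(u)>c(u')$, or $c(u)=c(u')$ together with $c(\omega(h))\leq c(\omega(h_2))$. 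Every step of $Q(\pi,u)$ raises the content, so $\tilde\pi$ and $\pi$ agree on all cells of content less than $c(u)$ and differ only at $u$ on the diagonal of content $c(u)$; a repetition of the crossing argument from Lemma~A shows in addition that $\C(\tilde\pi)$ contains no cell of content exceeding $c(u)$, hence $c(u')\leq c(u)$. In the tie case $c(u')=c(u)$ one compares the two North-East-paths $Q(\pi,u)$ and $Q(\tilde\pi,u')$; they start on the same diagonal and obey the analogous local rule with respect to $\pi$ and $\tilde\pi$, so they cannot cross, which forces $c(\omega(Q(\tilde\pi,u')))\geq c(\omega(Q(\pi,u)))$, that is $c(\omega(h))\leq c(\omega(h_2))$. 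Hence $h\leq h_2$.

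Combining everything, $h,h_2,\dots,h_s$ is a sequence of rim-hooks of $\lambda$ with $h\leq h_2\leq\dots\leq h_s$ and $h*h_2*\dots*h_s*0=h*\tilde\pi=\pi$, so it is a lexicographic factorisation of $\pi$ of the asserted form, and by Theorem~\ref{Theorem:lexfact} the unique one. The principal obstacle is Lemma~A, the statement that extraction along the path of the content-maximal candidate never fails; the other ingredients (the non-emptiness of $\C(\pi)$, the decrease in $\abs{\pi}$ that fuels the induction, and the comparison $h\leq h_2$ together with its tie case) are routine once the non-crossing machinery for North-East-paths in $\lambda$ has been set up.
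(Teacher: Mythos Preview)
The paper is an extended abstract and explicitly omits the proof of this theorem; it only records that the omitted arguments ``rely on a precise formulation of the familiar fact that certain paths in $\lambda$ cannot cross.'' Your plan is built around exactly that non-crossing principle, so at the level of strategy it matches what the paper advertises. There is no paper proof to compare against in detail.

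Two points nonetheless deserve attention. First, your appeal to Theorems~\ref{Theorem:lexinsert} and~\ref{Theorem:lexfact} to show $\C(\pi)\neq\emptyset$ is likely circular: in the intended development Theorem~\ref{Theorem:inverse} is what \emph{produces} the lexicographic factorisation, and Theorem~\ref{Theorem:lexfact} (existence) is a corollary, not an input. Fortunately the non-emptiness of $\C(\pi)$ for $\pi\neq0$ can be proved directly: take the smallest column index $j$ meeting a positive entry and walk down that column to the first cell $v$ with $\pi(v)>\pi(\n v)$; then $\pi(\w v)=0<\pi(v)$ as well, and if $v\in\B\cup\I$ one slides south along equal entries until reaching a cell of $\A\cup\O$, which is then a candidate. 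You should replace the circular step by such an argument. Second, your wording oscillates between ``content-minimal'' and ``content-maximal'' for the same cell; since $\min_{\trianglelefteq}$ picks the cell of \emph{largest} content (with ties broken by largest row index), fix the terminology once and stick to it.

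Beyond that, your identification of the three obstructions to Lemma~A and the plan to defeat each by exhibiting a candidate of strictly larger content is the right shape; this is precisely where the non-crossing lemma does the work, and it is indeed the delicate step. The comparison $h\leq h_2$ is also correctly reduced to $c(u')\leq c(u)$ plus a tail comparison in the tie case, and the observation that $\tilde\pi$ agrees with $\pi$ on all diagonals of content below $c(u)$ is the right starting point; the claim that $\C(\tilde\pi)$ acquires no candidate of content exceeding $c(u)$ will again need the non-crossing machinery rather than just ``a repetition'' of Lemma~A, so be prepared to state and prove that as a separate lemma.
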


For example, the lexicographic factorisation \eqref{eq:0123-122-1(lex)} of the reverse plane partition from \eqref{eq:0123-122-1} is obtained in \eqref{eq:candidates} below.
The candidates are highlighted in yellow, the minimal candidate $u$ and the path $Q(\pi,u)$ is highlighted in orange in each of the reverse plane partitions.

\begin{align}
\label{eq:candidates}
\raisebox{-2em}{
\begin{tikzpicture}[scale=.6]
\begin{scope}[xshift=0cm]
\fill[orange]
(3,2)--(4,2)--(4,3)--(3,3);
\fill[yellow]
(1,2)--(2,2)--(2,3)--(1,3)
(1,1)--(2,1)--(2,2)--(1,2)
(0,0)--(1,0)--(1,1)--(0,1);
\draw
(0,1)--(1,1)
(0,2)--(3,2)
(1,1)--(1,3)
(2,1)--(2,3)
(3,2)--(3,3)
;
\draw[line width=1pt](0,0)--(1,0)--(1,1)--(3,1)--(3,2)--(4,2)--(4,3)--(0,3)--cycle;
\draw[xshift=5mm,yshift=5mm]
(0,2)node{{$0$}}
(1,2)node{{$1$}}
(2,2)node{{$2$}}
(3,2)node{{$3$}}
(0,1)node{{$1$}}
(1,1)node{{$2$}}
(2,1)node{{$2$}}
(0,0)node{{$1$}};
\end{scope}
\begin{scope}[xshift=5.5cm]
\fill[orange]
(1,2)--(2,2)--(2,3)--(1,3);
\fill[yellow]
(1,1)--(2,1)--(2,2)--(1,2)
(0,0)--(1,0)--(1,1)--(0,1);
\draw
(0,1)--(1,1)
(0,2)--(3,2)
(1,1)--(1,3)
(2,1)--(2,3)
(3,2)--(3,3)
;
\draw[xshift=5mm,yshift=5mm,line width=2pt,orange](1,2)--(3.5,2);
\draw[line width=1pt](0,0)--(1,0)--(1,1)--(3,1)--(3,2)--(4,2)--(4,3)--(0,3)--cycle;
\draw[xshift=5mm,yshift=5mm]
(0,2)node{{$0$}}
(1,2)node{{$1$}}
(2,2)node{{$2$}}
(3,2)node{{$2$}}
(0,1)node{{$1$}}
(1,1)node{{$2$}}
(2,1)node{{$2$}}
(0,0)node{{$1$}};
\end{scope}
\begin{scope}[xshift=11cm]
\fill[orange]
(1,1)--(2,1)--(2,2)--(1,2);
\fill[yellow]
(0,0)--(1,0)--(1,1)--(0,1);
\draw
(0,1)--(1,1)
(0,2)--(3,2)
(1,1)--(1,3)
(2,1)--(2,3)
(3,2)--(3,3)
;
\draw[xshift=5mm,yshift=5mm,line width=2pt,orange](1,1)--(2.5,1);
\draw[line width=1pt](0,0)--(1,0)--(1,1)--(3,1)--(3,2)--(4,2)--(4,3)--(0,3)--cycle;
\draw[xshift=5mm,yshift=5mm]
(0,2)node{{$0$}}
(1,2)node{{$0$}}
(2,2)node{{$1$}}
(3,2)node{{$1$}}
(0,1)node{{$1$}}
(1,1)node{{$2$}}
(2,1)node{{$2$}}
(0,0)node{{$1$}};
\end{scope}
\begin{scope}[xshift=16.5cm]
\fill[orange]
(0,0)--(1,0)--(1,1)--(0,1);
\draw
(0,1)--(1,1)
(0,2)--(3,2)
(1,1)--(1,3)
(2,1)--(2,3)
(3,2)--(3,3)
;
\draw[xshift=5mm,yshift=5mm,line width=2pt,orange](0,0)--(0,1)--(2,1)--(2,2)--(3.5,2);
\draw[line width=1pt](0,0)--(1,0)--(1,1)--(3,1)--(3,2)--(4,2)--(4,3)--(0,3)--cycle;
\draw[xshift=5mm,yshift=5mm]
(0,2)node{{$0$}}
(1,2)node{{$0$}}
(2,2)node{{$1$}}
(3,2)node{{$1$}}
(0,1)node{{$1$}}
(1,1)node{{$1$}}
(2,1)node{{$1$}}
(0,0)node{{$1$}};
\end{scope}
\begin{scope}[xshift=22cm]
\draw
(0,1)--(1,1)
(0,2)--(3,2)
(1,1)--(1,3)
(2,1)--(2,3)
(3,2)--(3,3)
;
\draw[line width=1pt](0,0)--(1,0)--(1,1)--(3,1)--(3,2)--(4,2)--(4,3)--(0,3)--cycle;
\draw[xshift=5mm,yshift=5mm]
(0,2)node{{$0$}}
(1,2)node{{$0$}}
(2,2)node{{$0$}}
(3,2)node{{$0$}}
(0,1)node{{$0$}}
(1,1)node{{$0$}}
(2,1)node{{$0$}}
(0,0)node{{$0$}};
\end{scope}
\end{tikzpicture}
}
\end{align}

%

%

%


\bibliographystyle{amsalpha}
\bibliography{sample}

\end{document}